\theoremstyle{plain}
\newtheorem{thm}{Theorem}[section]
\newtheorem{prop}[thm]{Proposition}
\newtheorem{lem}[thm]{Lemma}
\newtheorem{cor}[thm]{Corollary}
\theoremstyle{definition}
\newtheorem{defn}[thm]{Definition}
\theoremstyle{remark}
\newtheorem{rem}[thm]{Remark}
\newtheorem{claim}[thm]{Claim}
\newcommand{\bC}{\ensuremath{\mathbb{C}}}
\newcommand{\bP}{\ensuremath{\mathbb{P}}}
\newcommand{\bZ}{\ensuremath{\mathbb{Z}}}
\newcommand{\cC}{\ensuremath{\mathcal{C}}}
\newcommand{\cH}{\ensuremath{\mathcal{H}}}
\newcommand{\cL}{\ensuremath{\mathcal{L}}}
\newcommand{\cM}{\ensuremath{\mathcal{M}}}
\newcommand{\cN}{\ensuremath{\mathcal{N}}}
\newcommand{\cO}{\ensuremath{\mathcal{O}}}
\newcommand{\cX}{\ensuremath{\mathcal{X}}}
\DeclareMathOperator{\Pic}{Pic}
\DeclareMathOperator{\Sing}{Sing}
\DeclareMathOperator{\Nef}{Nef}
\DeclareMathOperator{\Aut}{Aut}
\DeclareMathOperator{\Gal}{Gal}
\DeclareMathOperator{\mult}{mult}
\DeclareMathOperator{\ord}{ord}
\DeclareMathOperator{\rk}{rk}
\begin{document}

\title
[Non-K\"{a}hler Calabi-Yau $3$-folds]
{Examples of non-K\"{a}hler Calabi-Yau $3$-folds 
with arbitrarily large $b_2$}

\author{Kenji Hashimoto}
\address{Graduate School of Mathematical Sciences, The University of Tokyo,
3-8-1 Komaba, Meguro-ku, Tokyo 153-8914, Japan }
\email{hashi@ms.u-tokyo.ac.jp}

\author{Taro Sano}
\address{ Department of Mathematics, Graduate School of Science, Kobe university, 
\newline 1-1, Rokkodai, Nada-ku, Kobe 657-8501, Japan}
\email{tarosano@math.kobe-u.ac.jp}

\maketitle

\begin{abstract}
We construct non-K\"{a}hler simply connected Calabi-Yau 3-folds with arbitrarily large 2nd Betti numbers by smoothing 
normal crossing varieties with trivial dualizing sheaves. 
\end{abstract}

\tableofcontents

\section{Introduction} 
In this paper, a {\it Calabi-Yau manifold} means a compact complex manifold whose canonical bundle is trivial 
and $H^i(X, \cO_X)= H^0(X, \Omega^i_X) = 0$ for $0 <i < \dim X$. A projective Calabi-Yau manifold is often also called a strict Calabi-Yau manifold.  
Our main interest in this paper is a {\it Calabi-Yau 3-fold}, that is, a Calabi-Yau manifold of dimension $3$. 

Projective Calabi-Yau manifolds are one of the building blocks in the classification of algebraic varieties. 
Nevertheless, it is not known whether there are only finitely many topological types of projective Calabi-Yau 3-folds or not. 
The main purpose of this paper is to give infinitely many topological types of non-K\"{a}hler Calabi-Yau $3$-folds as follows. 

\begin{thm}\label{thm:mainintroduction}
Let $a>0$ be any positive integer. 
Then there exists a simply connected Calabi-Yau 3-fold $X(a)$ with the 2nd Betti number $b_2(X(a)) = a+3$, the topological Euler number $e(X(a)) = -256a^2+32a -224$ and the algebraic dimension $a(X(a)) =1$. 
\end{thm}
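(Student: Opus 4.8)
The plan is to construct $X$ as a smoothing of a normal crossing (Calabi-Yau) variety built from well-understood building blocks, then compute its invariants. This is the strategy advertised in the abstract, so I would proceed as follows.

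The plan is to realize $X$ as a general fiber of a one-parameter smoothing of a suitable normal crossing Calabi-Yau variety, following the Friedman--Kawamata--Namikawa strategy advertised in the abstract. First I would fix building blocks: smooth projective $3$-folds $V_1,\dots,V_k$, each ruled or fibered over a rational surface, together with K3 surfaces serving as double loci, and glue them along these K3's by isomorphisms chosen using the involutions of the accompanying family of K3 surfaces. The gluing produces a reduced normal crossing variety $X_0$ whose double locus $D$ is a disjoint union of K3 surfaces. The combinatorial type of the gluing (a chain or cycle of components) is governed by the parameter $a$, so that the rank of the relevant lattice of divisor classes grows linearly in $a$.

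Next I would check the two structural hypotheses needed to smooth $X_0$. The triviality $\omega_{X_0}\cong\cO_{X_0}$ reduces, by adjunction on each component and matching of Poincar\'e residues along $D$, to arranging $\cO_{V_i}(D)|_D$ to be the appropriate anticanonical twist; the triple-point compatibility conditions are automatic here because there are no triple intersections (each component of $D$ is a single K3). Friedman's d-semistability, namely $N_{D/V_i}\otimes N_{D/V_j}\cong\cO_D$ along each double component $D=V_i\cap V_j$, is the one genuinely delicate point, and is where the freedom in the gluing isomorphisms is spent. Granting d-semistability together with the vanishing $H^2(X_0,\cO_{X_0})=0$ (which follows from $H^2(V_i,\cO_{V_i})=0$ and $H^1(D,\cO_D)=0$ via Mayer--Vietoris), the Kawamata--Namikawa smoothing theorem yields a flat family $\cX\to\Delta$ with $\cX_0=X_0$, smooth total space, and smooth fibers $X=\cX_t$ carrying $\omega_X\cong\cO_X$. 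The Calabi-Yau conditions $H^i(X,\cO_X)=0$ for $0<i<3$ then follow from upper semicontinuity applied to the corresponding vanishing on $X_0$.

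For the topological invariants I would run the Clemens--Schmid exact sequence of the semistable degeneration $\cX\to\Delta$ against the Mayer--Vietoris computation of $H^\bullet(X_0)$ in terms of the $V_i$ and $D$. The second Betti number $b_2(X)=\dim H^2(X,\bQ)$ is the rank of the monodromy-invariant part of $H^2$ of the nearby fiber: the $k$ components contribute their divisor classes, the limit mixed Hodge structure accounts for the vanishing cycles, and after the cancellations dictated by that structure the invariant rank collapses to $a+3$, with the final rank-$3$ piece reflecting the class of the K3 fiber of the algebraic reduction together with the part of the double-locus classes surviving the monodromy. Simple connectivity follows from van Kampen applied to $X_0$: each $V_i$ and each K3 double locus is simply connected, so $\pi_1(X_0)=1$, and since $\cX$ retracts onto $X_0$ while the vanishing cycles bound, $\pi_1(X)=1$ as well.

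Finally, to identify $a(X)=1$ I would transport the common fibration $V_i\to\bP^1$ through the smoothing to a K3-fibration $f\colon X\to\bP^1$, giving $a(X)\ge 1$. For the reverse inequality the algebraic reduction factors through $f$, so $a(X)\le 1+a(F)$ for a general fiber $F$; the heart of the matter is to prove $a(F)=0$, i.e. that the general K3 fiber is non-algebraic. This is exactly where the companion result on families of K3 surfaces with involutions that do not lift biregularly enters: the monodromy of the family forces the period of $F$ to be generic subject to its invariance constraints, so that $\Pic(F)$ contains no divisor of positive self-intersection and $a(F)=0$. Since a Kähler Calabi-Yau $3$-fold is projective (as $H^2(X,\cO_X)=0$ forces a Kähler class to be integral, so Kodaira embedding applies), projectivity would give $a(X)=3$; hence $a(X)=1$ simultaneously shows that $X$ is non-Kähler. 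The main obstacle is this last step: controlling the periods of the K3 fibers tightly enough to guarantee $a(F)=0$ for the general fiber, rather than merely for a very general one, which is precisely what makes the non-liftability input indispensable.
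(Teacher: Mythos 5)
Your high-level strategy coincides with the paper's (glue K\"ahler blocks along a K3 surface into a $d$-semistable SNC variety with trivial dualizing sheaf, smooth it by Theorem \ref{thm:kawamata-namikawa}, and compute invariants through the Clemens retraction), but the proposal defers exactly the steps that constitute the proof, and two of your deductions are invalid as stated. First, you ``grant'' $d$-semistability, saying it is where the gluing freedom is spent. Generic freedom does not suffice: making $X_0$ $d$-semistable \emph{while} forcing its Picard rank to grow with $a$ is the central discovery of the paper. Concretely, the paper takes only two components, $X_2=\bP^1\times\bP^1\times\bP^1$ and $X_1$ its blow-up along $a$ elliptic fibers $f_i\subset S$ plus one further curve $C_a$, and glues along the Wehler K3 $S$ after twisting by the \emph{infinite-order} automorphism $\iota^a=(\iota_{12}\circ\iota_{13})^a$; the class $C_a\in|\cO_S(16a^2-a+4,4-8a,4+8a)|$ is solved for precisely so that $\cN_{S_1/X_1}\otimes\iota_a^*\cN_{S_2/X_2}\cong\cO_{S_1}$ (Claim \ref{claim:onX_0}). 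Without such a mechanism your ``chain of components governed by $a$'' is a plan, not a construction (a cycle is self-defeating, since then $\pi_1(X_0)\neq 1$ and the smoothing acquires $b_1\neq 0$), and your $b_2(X)=a+3$ is an assertion, not a computation: the paper gets $\rk\Pic X_0=a+4$ from the fiber-product description of $\Pic X_0$ and then shows the Leray spectral sequence of $c_t$ drops the rank by exactly one (Claim \ref{claim:b1b2}). Second, your simple-connectivity step fails: $\pi_1(X_0)=1$ together with ``the vanishing cycles bound'' does \emph{not} give $\pi_1(X)=1$, because the $S^1$-fibers of $c_t$ over the double locus have no a priori reason to be null-homotopic in $X$ (in an $I_n$ degeneration of elliptic curves they are not). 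The paper has to prove this, via $\pi_1(X)\cong\pi_1(X_1\setminus X_{12})\ast_{\pi_1(\tilde{X}_{12})}\pi_1(X_2\setminus X_{12})$, the computations $\pi_1(X_1\setminus X_{12})=1$ (using the exceptional divisors) and $\pi_1(X_2\setminus X_{12})\cong\bZ/2\bZ$ (Gysin; $X_{12}$ is $2$-divisible), and the surjectivity of $\pi_1(\tilde{X}_{12})\to\pi_1(X_2\setminus X_{12})$, which is exactly what kills the vanishing circle (Proposition \ref{prop:simplyconnected}).

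The most serious gap is the algebraic dimension. Your route through $a(X)\le 1+a(F)$ requires $a(F)=0$ for the general K3 fiber of $\varphi\colon X\to\bP^1$, which you support only by ``monodromy forces the period of $F$ to be generic'' and by invoking the involution example; but that example (Section \ref{section:specialK3}) is a by-product of the paper, logically independent of Theorem \ref{thm:mainintroduction}, and nothing in the construction establishes --- or needs --- non-algebraicity of the fibers of $\varphi$. The paper's argument never touches the fibers: since $H^2(\cX,\bZ)\to H^2(X,\bZ)$ is surjective (Claim \ref{claim:H2surjective}) and $\Pic=H^2$ on both spaces, any effective $L\in\Pic X$ with $\kappa(L)\ge 2$ extends to $\cL\in\Pic\cX$; twisting by $\cO_{\cX}(-m_1X_1-m_2X_2)$ yields a line bundle on $X_0$ effective on both components, and Proposition \ref{prop:nonprojective} --- this is where the $\iota^a$-twist pays off, through the explicit matrix of $(\iota^a)^*$ on $\Pic S$ --- forces such a bundle to be $\mu^*\cO_{P(3)}(a_1,0,0)-\sum b_jE_j$ on $X_1$ and $\cO_{P(3)}(a_1-\sum b_j,0,0)$ on $X_2$, hence of Kodaira dimension at most $1$, a contradiction (Proposition \ref{prop:algdimX1}). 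You would need to replace your period-genericity step by this kind of lattice-theoretic rigidity on the central fiber; as written, that step lacks a proof, may well be false, and rests on a misreading of what the K3-involution example is for.
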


As far as we know, our examples are the first examples of 
complex Calabi-Yau 3-folds with arbitrarily large $b_2$ in our sense. Their topological Euler number can be arbitrarily small negative. 
For positive integers $a \neq a'$, we see that $X(a)$ and $X(a')$ are not bimeromorphic (Remark \ref{rem:nonbimeromorphic}), thus show bimeromorphic unboundedness of non-K\"{a}hler Calabi-Yau 3-folds. It is also remarkable that the Hodge to de Rham spectral sequence degenerates at $E_1$ on $X(a)$ and they have unobstructed deformations (Remark \ref{rem:HodgeSymmetry}).  

Friedman \cite[Example 8.9]{MR1141199} constructed infinitely many topological types of Calabi-Yau $3$-folds with $b_2=0$ by deforming a Calabi-Yau $3$-fold with ordinary double points based on Clemens' construction of some quintic 3-folds with infinitely many smooth rational curves \cite{MR720930}. 
There are also infinitely many examples of Calabi-Yau 3-folds of $b_2 =1$ but with different cubic forms on $H^2$
as flops of a fixed Calabi-Yau $3$-fold (cf.\ \cite[Example 7.6]{MR1141199}, \cite[Example 14]{MR1365849}). Fine--Panov  \cite[Section 3]{MR2679581} constructed simply connected compact complex 3-folds with trivial canonical bundle, 
arbitrarily large $b_2$ and non-zero holomorphic 2-forms.

Non-K\"{a}hler Calabi-Yau manifolds are also interesting from the point of view of string theory (the Strominger equations) and complex differential geometry (cf.  \cite{MR2891478}, \cite{MR2985333}, \cite{MR3372471} and references therein). 

We shall construct the examples by smoothing simple normal crossing (SNC) varieties via the log deformation theory developed by Kawamata--Namikawa \cite{MR1296351}. 
Lee \cite{MR2658406} considered log deformations of SNC varieties consisting of two irreducible components which are called Tyurin degenerations. 
We also use Tyurin degenerations to construct our examples.
The new point in this paper is to consider gluing automorphisms of the intersection of irreducible components of SNC varieties. 
Tyurin degenerations are also studied in the context of mirror symmetry (\cite{MR2112600}, \cite{MR3751815}, \cite{MR3633789}). 


\subsection{Sketch of the construction} 

First, we prepare an SNC variety $X_0(a)=X_1 \cup X_2$, where $X_1$ is the blow-up of $\bP^1 \times \bP^1 \times \bP^1$ along some curves 
$f_1, \ldots, f_{a}, C$ and 
$X_2:=\bP^1 \times \bP^1 \times \bP^1$. The curves $f_1, \ldots, f_{a}$ are distinct smooth fibers of an elliptic fibration 
$S \rightarrow \bP^1$ on a very general $(2,2,2)$-hypersurface $S$ induced by the 1st projection. 
We glue $X_1$ and $X_2$ along $S$ and its strict transform to construct $X_0(a)$. 
Since $S$ is an anticanonical member, we have $\omega_{X_0(a)} \simeq \cO_{X_0(a)}$. 
In order to make $X_0(a)$ ``$d$-semistable'', we need to blow-up $f_1, \ldots , f_{a}$ and some curve $C$. 
The point is that we glue after twisting by a certain automorphism of $S$ of infinite order. 
Because of this, the number of blow-up centers for $X_1$ can be 
arbitrarily large.   

Thus we obtain $X_0(a)$ which satisfies the hypothesis of Theorem \ref{thm:kawamata-namikawa} \cite[Theorem 4.2]{MR1296351} 
and can deform $X_0(a)$ to a Calabi-Yau $3$-fold $X(a)$ 
which turns out to be non-K\"{a}hler. 
This $X(a)$ is the example in Theorem \ref{thm:mainintroduction}. 
Note that we can apply the smoothing result even when the SNC variety itself is not projective but the irreducible components are K\"{a}hler. 
(See Remark \ref{rem:kawamata-namikawa})

We check that $X_0(a)$ and $X(a)$ are both non-K\"{a}hler if we twist by a non-trivial automorphism of $S$ (Proposition \ref{prop:nonprojective}. See also Remark \ref{rem:classC}). 
We use Lemma \ref{lem:biglinebundle} which states that, under some conditions, an SNC variety which is a degeneration of a projective Calabi-Yau manifold 
admits a big line bundle whose restriction to each irreducible component still has a non-zero section. 
Moreover, we show that the algebraic dimension of $X$ is $1$ (Proposition \ref{prop:algdimX1}). 
We also compute the topological Euler number of $X$ (Claim \ref{claim:Eulernumber}) and check that $X$ is simply connected (Proposition \ref{prop:simplyconnected}).

\subsection{Notations} We work over the complex number field $\bC$ throughout the paper. 
We call a complex analytic space $X$ {\it a (proper) SNC variety} if $X$ has only normal crossing singularities and 
its irreducible components are smooth (proper) varieties. 
We identify a proper scheme over $\bC$ and its associated compact analytic space unless otherwise stated. 

Let $X$ be a proper SNC variety and $\phi \colon \cX \rightarrow \Delta^1$ be a proper flat morphism of analytic spaces over a unit disk $\Delta^1$ 
such that $\phi^{-1}(0) \simeq X$, that is, $\phi$ is {\it a deformation of $X$}. 
We call $\phi$ {\it a semistable smoothing} of $X$ if $\cX$ is smooth and its general fiber $\cX_t:= \phi^{-1}(t)$ is smooth for $t \neq 0$. 

\section{Preliminaries}


The following result guarantees the existence of a gluing of two schemes along their isomorphic closed subschemes. 

\begin{thm}\label{thm:coproduct}( \cite[1.1]{MR0335524}, \cite[Th\'{e}or\`{e}me 5.4, Th\'{e}or\`{e}me 7.1]{MR2044495}, \cite[Corollary 3.9]{MR2182775})
Let $Y, X_1, X_2$ be schemes and $\iota_i \colon Y \hookrightarrow X_i$ be  closed immersions for $i=1,2$. 
Then there exists a scheme $X$ in the Cartesian diagram
\[
\xymatrix{
Y \ar@{^{(}->}[d]^{\iota_2} \ar@{^{(}->}[r]^{\iota_1} & X_1  \ar@{^{(}->}[d]^{\phi_1} \\
X_2 \ar@{^{(}->}^{\phi_2}[r] & X
}
\]
such that  $\phi_1$ and $\phi_2$ are  closed immersions and they  
induce isomorphisms $X_1 \setminus Y \xrightarrow{\simeq} X \setminus X_2$ and $X_2 \setminus Y \simeq X \setminus X_1$. 
(We say that $X$ is the push-out of the morphisms $\iota_1$ and $\iota_2$.)
\end{thm}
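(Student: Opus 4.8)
The plan is to prove the existence of the push-out by a gluing/descent argument, which is essentially a statement about amalgamating structure sheaves along a common closed subscheme. Since the cited references (Ferrand, Schwede) handle this in full generality, my task is really to indicate why such a gluing exists and satisfies the stated universal/open-immersion properties.

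First I would reduce to the affine case by a standard patching argument. The schemes $X_1$ and $X_2$ are covered by affine opens, and because $\iota_1,\iota_2$ are closed immersions, $Y$ inherits compatible affine covers. On affine charts $X_i = \Spec A_i$ with $Y = \Spec B$ and surjections $A_i \twoheadrightarrow B$, the natural candidate for the push-out is the fiber product of rings
\[
A := A_1 \times_B A_2 = \{ (a_1,a_2) \in A_1 \times A_2 : \bar{a}_1 = \bar{a}_2 \text{ in } B \},
\]
and one sets $X = \Spec A$ locally. The key algebraic facts to verify are that $A \to A_i$ are surjective with kernels identified with the ideals $I_i = \Ker(A_i \to B)$, and that the induced square of spectra is Cartesian. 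Concretely, $\Ker(A \to A_1) \cong I_2$ via $(0,a_2) \mapsto a_2$, so $\phi_i$ are closed immersions; and the open complement $\Spec A \setminus V(I_2)$ is canonically isomorphic to $\Spec A_1 \setminus V(I_1) = X_1 \setminus Y$, giving the two asserted open isomorphisms.

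Next I would globalize. The affine-local push-outs must be shown to glue along overlaps; here the fiber-product construction is functorial in the triple $(A_1,A_2,B)$, so on intersections of charts the local spectra agree canonically, and one patches them into a scheme $X$ equipped with the closed immersions $\phi_1,\phi_2$ and the identification $\phi_1\iota_1 = \phi_2\iota_2$. The open-immersion properties are local, hence follow from the affine computation. One should check that the glued object is genuinely a scheme (not merely a ringed space); this is where I would invoke the cited results, since separatedness and the scheme axioms for pushouts along closed immersions require the finiteness/affineness hypotheses established by Ferrand and Schwede.

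The main obstacle is precisely this last point: pushouts in the category of schemes do \emph{not} exist in general, and the content of the cited theorems is to pin down exactly when the naive ringed-space gluing is representable by a scheme. The delicate step is verifying that $A = A_1 \times_B A_2$ behaves well—for instance that it is a reasonable (e.g.\ Noetherian or finite-type) ring when $A_1,A_2,B$ are, and that the spectrum of the global gluing is covered by the affine pieces without pathology. Rather than reprove these foundational results, I would cite \cite{MR0335524}, \cite{MR2044495}, and \cite{MR2182775} for the representability and confine the argument to recording the explicit local model and checking the Cartesian/open-immersion conclusions, which is all that is needed downstream for constructing the SNC variety $X_0(a)$ by gluing $X_1$ and $X_2$ along $S$.
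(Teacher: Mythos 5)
Your affine local model and the verification of its properties (surjectivity of $A\to A_i$, kernels $\cong I_2, I_1$, matching open complements) coincide with the paper's proof, specifically Claim \ref{claim:openaffineisom} there; the genuine gap is in your very first step, the ``reduction to the affine case.'' You assert that ``because $\iota_1,\iota_2$ are closed immersions, $Y$ inherits compatible affine covers.'' This is not automatic, and it is precisely the nontrivial content of the theorem: for a point $p \in Y$ and arbitrary affine opens $U_1 \ni \iota_1(p)$ in $X_1$ and $U_2 \ni \iota_2(p)$ in $X_2$, the traces $\iota_1^{-1}(U_1)$ and $\iota_2^{-1}(U_2)$ are in general different open subsets of $Y$, and then the triple of rings to which you want to apply the fiber-product construction does not exist. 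Producing affine opens with $\iota_1^{-1}(U_1) = \iota_2^{-1}(U_2)$ is exactly the paper's Claim \ref{claim:existenceopenaffine}, and it requires an argument: starting from an affine $U_1'$, one chooses an affine $U_2''$ whose trace on $Y$ is contained in that of $U_1'$, picks a function $f$ on $\iota_1^{-1}(U_1')$ vanishing on the complement of $\iota_2^{-1}(U_2'')$ but not at $p$, lifts $f$ and its image to functions $f_1$ on $U_1'$ and $f_2$ on $U_2''$, and replaces the charts by the distinguished opens $D(f_1)$ and $D(f_2)$. Without this (or an equivalent) shrinking step, your chartwise construction cannot start, and the subsequent gluing of local pushouts has no well-defined gluing data.

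Relatedly, you mislocate the delicate point. Whether $A_1 \times_B A_2$ is Noetherian or of finite type is irrelevant: the statement carries no finiteness hypotheses, any ring has a spectrum, and for two closed immersions the cited results of Ferrand and Schwede hold unconditionally; what their proofs (and the paper's) turn on is the compatible-charts lemma above, which is the one step you delegate entirely to the citations. So the appeal to the literature hides the crux rather than a routine foundational check. Once that lemma is in place, the rest of your outline does go through; note that the paper organizes the globalization more economically, by first forming the quotient topological space equipped with the fiber-product structure sheaf and only then verifying local affineness, which avoids any cocycle verification on overlaps. (A minor indexing slip: with your identification $\Ker(A \to A_1) \cong I_2$, the closed set $V(I_2)$ is the image of $\Spec A_1$, so $\Spec A \setminus V(I_2)$ is $X \setminus X_1 \simeq X_2 \setminus Y$; the complement canonically isomorphic to $X_1 \setminus Y = \Spec A_1 \setminus V(I_1)$ is $\Spec A \setminus V(\Ker(A \to A_2))$.)
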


\begin{proof}
We can show the proposition following \cite[1.1]{MR0335524}. 
\end{proof}

By this, we have the following corollary. 

\begin{cor}\label{cor:pushoutSNC} 
Let $X_1, X_2$ be  smooth proper varieties and $D_i \subset X_i$ be smooth divisors for $i=1,2$ 
with an isomorphism $\phi \colon D_1 \xrightarrow{\simeq} D_2$. Let $i_1 \colon D_1 \hookrightarrow X_1$ and $i_2 \colon D_2 \hookrightarrow X_2$ be 
the given closed immersions and let 
$Y$ be the push-out of two closed immersions $\iota_1:= i_1$ and $\iota_2:= i_2 \circ \phi$ which exists by Theorem \ref{thm:coproduct}. 

Then $Y$ is a proper SNC variety with two irreducible components $Y_1$ and $Y_2$ such that $Y_i \simeq X_i$ and 
$Y_1 \cap Y_2 \simeq D_i$ for $i=1,2$.  (We denote by $Y =: Y_1 \cup^{\phi} Y_2$ the push-out.)
\end{cor}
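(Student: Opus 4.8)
The plan is to apply Theorem \ref{thm:coproduct} directly to obtain $Y$ together with its two closed immersions, and then to verify the three assertions by combining the global structure coming from the theorem with the local fiber-product description of $\cO_Y$ furnished by Claim \ref{claim:openaffineisom}. Since $\phi \colon D_1 \simeq D_2$ forces $\dim X_1 = \dim X_2 =: n$ (both equal $\dim D_i + 1$), the expected local model is the standard normal crossing singularity in dimension $n$.

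First I would apply Theorem \ref{thm:coproduct} to $\iota_1 = i_1$ and $\iota_2 = i_2 \circ \phi$, producing a scheme $Y$ with closed immersions $\phi_1 \colon X_1 \hookrightarrow Y$ and $\phi_2 \colon X_2 \hookrightarrow Y$. Setting $Y_i := \phi_i(X_i)$ gives $Y_i \simeq X_i$, so each $Y_i$ is an irreducible smooth proper variety. The isomorphisms $X_1 \setminus D_1 \simeq Y \setminus Y_2$ and $X_2 \setminus D_1 \simeq Y \setminus Y_1$ provided by the theorem show that $Y = Y_1 \cup Y_2$ and that neither $Y_i$ is contained in the other; hence $Y_1, Y_2$ are exactly the irreducible components. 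The scheme-theoretic intersection $Y_1 \cap Y_2$ is the common image of $D_1$ under the natural map $\psi$, which is carried isomorphically onto $D_1$ by $\phi_1|_{D_1}$ and onto $D_2$ via $\phi$; this yields $Y_1 \cap Y_2 \simeq D_i$ for $i = 1,2$.

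Next I would record properness and separatedness. The natural map $q \colon X_1 \coprod X_2 \to Y$ is surjective (as $Y = Y_1 \cup Y_2$) and finite, since on each affine chart $\Spec(R_1 \times_{\bar R} R_2)$ of Claim \ref{claim:openaffineisom} the conductor sequence $0 \to R_1 \times_{\bar R} R_2 \to R_1 \times R_2 \to \bar R \to 0$ exhibits $R_1 \times R_2$ as a module-finite algebra over $R_1 \times_{\bar R} R_2$. Those same charts show $Y$ is of finite type and separated over $\bC$, so properness of $Y$ follows by descent along the surjection $q$ from the proper scheme $X_1 \coprod X_2$. For the SNC structure, away from $Y_1 \cap Y_2$ the scheme $Y$ is locally isomorphic to $X_1$ or $X_2$ and hence smooth. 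At a point $p \in D_1 = Y_1 \cap Y_2$, Claim \ref{claim:openaffineisom} gives $\cO_{Y,p} \simeq \cO_{X_1,p} \times_{\cO_{D_1,p}} \cO_{X_2,p}$; since completion is exact on the conductor sequence, I would deduce $\widehat{\cO}_{Y,p} \simeq \widehat{\cO}_{X_1,p} \times_{\widehat{\cO}_{D_1,p}} \widehat{\cO}_{X_2,p}$. Because $D_i \subset X_i$ is a smooth divisor in a smooth $n$-fold, $\widehat{\cO}_{X_i,p} \simeq \widehat{\cO}_{D_1,p}[[t_i]]$ with $D_i = \{t_i = 0\}$ and $\widehat{\cO}_{D_1,p} \simeq \bC[[z_1,\ldots,z_{n-1}]]$, and a direct computation identifies the fiber product with $\bC[[z_1,\ldots,z_{n-1},t_1,t_2]]/(t_1 t_2)$, the standard normal crossing singularity. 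Hence $Y$ is a proper SNC variety with smooth components $Y_1, Y_2$ meeting transversally along $D_i$.

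The main obstacle I expect is the local identification at $p \in D_1$: one must justify carefully that completion commutes with the fiber product of local rings, and that the smooth-divisor hypothesis produces the local equations $t_1, t_2$ so that the completed fiber product is exactly $\bC[[z_1,\ldots,z_{n-1},t_1,t_2]]/(t_1 t_2)$, rather than a nonreduced or non-transverse degeneration of it. The remaining points — irreducibility of the components, the scheme-theoretic double locus, and properness — are formal consequences of Theorem \ref{thm:coproduct} and the affine charts of Claim \ref{claim:openaffineisom}.
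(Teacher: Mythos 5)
Your proposal is correct and follows essentially the same route as the paper: both reduce to the affine charts $\Spec (R_1 \times_{\bar{R}} R_2)$ of Claim \ref{claim:openaffineisom} and then identify the completed local ring at a point of the double locus with $\hat{\cO}_{X_1,p} \times_{\hat{\cO}_{D_1,p}} \hat{\cO}_{X_2,p} \simeq \bC[[z_1,\ldots,z_{n+1}]]/(z_n z_{n+1})$, using smoothness of the divisors $D_i \subset X_i$ to produce the local coordinates. The only differences are technical rather than conceptual: where you invoke exactness of completion on the conductor sequence, the paper checks $R/\mathfrak{m}^k \simeq R_1/\mathfrak{m}_1^k \times_{\bar{R}/\bar{\mathfrak{m}}^k} R_2/\mathfrak{m}_2^k$ by hand and passes to the inverse limit, and your descent-of-properness argument via the finite surjection $X_1 \coprod X_2 \rightarrow Y$ is actually more detailed than the paper's one-line remark that properness is checked from the definition.
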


\begin{proof} 
We check the properness of $Y$ by definition of properness. 
We also check that $Y$ is normal crossing by a local computation. 
\end{proof}

\begin{rem}
Note that a proper SNC variety is non-projective in general 
even if its irreducible components are projective. Let $X= X_1 \cup X_2$ be a proper SNC variety such that $X_1$ and $X_2$ 
are projective varieties and $D:= X_1 \cap X_2$. Then $X$ is projective if and only if there are ample line bundles $\cL_1$ on $X_1$ and $\cL_2$ on $X_2$ 
such that $\cL_1|_D \simeq \cL_2|_D$. 
\end{rem}

\begin{defn}\label{defn:d-semistable}
Let $X$ be an SNC variety and $X= \bigcup_{i=1}^N X_i$ be the decomposition into its irreducible components. Let $D:= \Sing X = \bigcup_{i \neq j} (X_i \cap X_j)$ be the double locus and let  $I_{X_i}, I_D \subset \cO_X$ be the ideal sheaves of 
$X_i$ and $D$ on $X$. Let 
\[
\cO_D(X):= (\bigotimes_{i=1}^N I_{X_i} / I_{X_i} I_D)^{\ast} \in \Pic D
\]  
be the {\it infinitesimal normal bundle}  as in \cite[Definition 1.9]{MR707162}. 

We say that $X$ is {\it $d$-semistable} if $\cO_D(X) \simeq \cO_D$. 
\end{defn}

\begin{rem}
Let $X =  \bigcup_{i=1}^N X_i$ and $D$ be as in Definition \ref{defn:d-semistable}.  
Friedman \cite[Corollary 1.12]{MR707162} proved that, if $X$ has a semistable smoothing, then $X$ is $d$-semistable. 

When $N=2$, we have $\cO_D(X) \simeq \cN_{D/X_1} \otimes_{\cO_{D}} \cN_{D/X_2}$ 
via a suitable identification of $D \subset X_1$ and $D \subset X_2$, where $\cN_{D/X_i}$ is the normal bundle of $D \subset X_i$ for $i=1,2$.   
\end{rem}

We use the following theorem of Kawamata--Namikawa. 

\begin{thm}\label{thm:kawamata-namikawa}(\cite[Theorem 4.2]{MR1296351}, \cite[Corollary 7.4]{CLM})
Let $n \ge 3$ and 
$X$ be an $n$-dimensional proper SNC variety which satisfies the following;  
\begin{enumerate}
\item $\omega_X \simeq \cO_X$. 
\item $H^{n-1}(X, \cO_X) =0$, $H^{n-2}(X^{\nu}, \cO_{X^{\nu}}) =0$, where  $X^{\nu} \rightarrow X$ is the normalization. 
\item $X$ is $d$-semistable. 
\end{enumerate}
Then there exists a semistable smoothing $\phi \colon \cX \rightarrow \Delta^1$ of $X$ over a unit disk.  
\end{thm}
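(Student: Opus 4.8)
The plan is to realize $X$ as the central fibre of a \emph{log smooth} family over the standard logarithmic disk, and to prove existence of such a family by a logarithmic version of the Bogomolov--Tian--Todorov unobstructedness theorem. First I would use the $d$-semistability hypothesis (3) to endow $X$ with a logarithmic structure: by the work of Kato and Steenbrink, the condition $\cO_D(X) \simeq \cO_D$ of Definition \ref{defn:d-semistable} is exactly what is needed to construct a fine saturated log structure $\cM_X$ on $X$ together with a log smooth morphism $(X, \cM_X) \to (\Spec \bC, \bN)$ to the standard log point, whose local models are the semistable ones $\Spec \bC[x_1, \dots, x_{n+1}]/(x_1 \cdots x_k)$ with log structure pulled back from $x_1 \cdots x_k = t$. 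Writing $\Omega^1_X(\log)$ for the sheaf of relative log differentials and $\Theta_X(\log) := \mathcal{H}om(\Omega^1_X(\log), \cO_X)$ for the log tangent sheaf, both are locally free, and I would set up the functor of log smooth deformations of $(X,\cM_X)$ over Artinian log points and log disks, with tangent space $H^1(X, \Theta_X(\log))$ and obstructions in $H^2(X, \Theta_X(\log))$.

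Next I would prove that this log deformation functor is unobstructed. The triviality of $\omega_X$ in hypothesis (1) gives $\Omega^n_X(\log) \simeq \omega_X \simeq \cO_X$, and contraction with a nowhere-vanishing log $n$-form yields an isomorphism $\Theta_X(\log) \simeq \Omega^{n-1}_X(\log)$; hence the groups controlling the deformation theory are identified with Hodge pieces $H^q(X, \Omega^p_X(\log))$ of the log de Rham complex. The heart of the argument is the $T^1$-lifting criterion of Kawamata and Ran: it suffices to show that for each $m$ the natural map $T^1_{A_{m+1}} \to T^1_{A_m}$ between spaces of first-order log deformations over the thickenings $A_m = \bC[t]/(t^{m+1})$ is surjective. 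This surjectivity is deduced from the $E_1$-degeneration of the logarithmic Hodge-to-de Rham spectral sequence $E_1^{pq} = H^q(X, \Omega^p_X(\log)) \Rightarrow \bH^{p+q}$, which supplies the flatness of the Hodge filtration needed to lift deformation classes order by order. Because $X$ is only assumed proper and need not be Kähler, this degeneration cannot be taken from classical Hodge theory; instead it rests on Steenbrink-type results for limiting mixed Hodge structures of semistable degenerations, and establishing it is the technical core of the proof and the main obstacle.

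With unobstructedness in hand, the base of the semiuniversal log deformation is smooth, and it remains to produce a one-parameter log deformation over the standard log disk $\Delta^\dagger$ rather than merely over a log point. Here the vanishings (2) enter: analysing the exact sequence relating $\Theta_X(\log)$ to the ordinary tangent sheaf and to the double locus $D$, together with the normalization sequence relating $\cO_X$, $\cO_{X^\nu}$ and $\cO_D$ and Serre duality through the isomorphism $\Theta_X(\log) \simeq \Omega^{n-1}_X(\log)$, the hypotheses $H^{n-1}(X, \cO_X) = 0$ and $H^{n-2}(X^\nu, \cO_{X^\nu}) = 0$ force the obstruction to moving the base log parameter to vanish, so that the ``smoothing direction'' lies in the image of $H^1(X, \Theta_X(\log))$. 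Choosing such a direction and using smoothness of the log deformation space, I obtain a formal log smooth family over $\Delta^\dagger$ restricting to $(X, \cM_X)$ over the log point.

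Finally I would pass from the formal to the analytic category and verify semistability. The underlying family of a log smooth morphism over $\Delta^\dagger$ is, étale-locally, of the form $\{x_1 \cdots x_k = t\}$, whose total space is always smooth (the defining equation has nonvanishing $t$-derivative) and whose fibre over $t \neq 0$ is smooth since the log structure is trivial away from the origin, while the central fibre is $X$. Artin approximation together with the convergence arguments of Kawamata--Namikawa then upgrade the formal family to a genuine analytic family $\phi \colon \cX \to \Delta^1$ over a small disk, which is the desired semistable smoothing. As indicated above, the decisive and hardest step throughout is the logarithmic $E_1$-degeneration underpinning the unobstructedness in this possibly non-Kähler setting.
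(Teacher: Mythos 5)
Your route is the same one the paper takes (following Kawamata--Namikawa): use $d$-semistability to put a log structure of semistable type on $X$, identify $\Theta_X(\log)\simeq \Omega^{n-1}_X(\log)$ via the trivialization of $\omega_X$, prove unobstructedness of log smooth deformations by the $T^1$-lifting criterion fed by $E_1$-degeneration of the log Hodge-to-de Rham spectral sequence, and pass from the formal family over the log disk to an analytic semistable smoothing via the Kuranishi family and Artin approximation. However, at the step you yourself single out as decisive --- the $E_1$-degeneration in the absence of a K\"ahler hypothesis --- your proposed fix does not work, and this is a genuine gap. You propose to deduce the degeneration from ``Steenbrink-type results for limiting mixed Hodge structures of semistable degenerations.'' Limiting mixed Hodge structure theory takes as input an actual semistable family over the disk, which is precisely the object the theorem is trying to produce, so invoking it here is circular; moreover, those results are established for projective or K\"ahler families, i.e.\ under the very hypothesis that this statement drops from the original Kawamata--Namikawa theorem.

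What actually closes the gap (and this is the whole content of the paper's Remark \ref{rem:kawamata-namikawa}) is that the degeneration argument of \cite[Lemma 4.1]{MR1296351} is an \emph{intrinsic} construction of a cohomological mixed Hodge complex on the $d$-semistable variety $X$ itself, requiring no family; its only Hodge-theoretic input is that each stratum $X^{[k]}$ carries a pure Hodge structure on $H^i(X^{[k]},\bQ)$. In the paper's setting the irreducible components of a proper SNC variety are smooth proper \emph{algebraic} varieties, so every stratum is a smooth proper variety and Deligne's theory supplies purity with no K\"ahler assumption (alternatively, K\"ahler components suffice). You should also note that the paper records a second degeneration needed to run Kawamata--Namikawa's unobstructedness argument, namely that of ${E'_1}^{p,q}=H^q(X,\hat{\Omega}^p_X)\Rightarrow \bH^{p+q}(X,\hat{\Omega}^{\bullet}_X)$ for the torsion-free quotient $\hat{\Omega}^p_X=\Omega^p_X/\tau^p_X$, as in \cite[Proposition 1.5 (3)]{MR707162}; your sketch omits it, but it degenerates by exactly the same purity-on-strata input. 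With those two points supplied, the rest of your outline matches the paper's proof.
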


\begin{rem}\label{rem:kawamata-namikawa} 
 In \cite[Theorem 4.2]{MR1296351}, it is assumed that $X$ is K\"{a}hler. However, we check that we only need to assume that $X$ is a proper SNC variety (or each irreducible component is  
K\"{a}hler). 

The essential tools are two spectral sequences. 
One is the log Hodge to de Rham spectral sequence in \cite[Lemma 4.1]{MR1296351} which follows from the construction of a cohomological mixed Hodge complex (See also \cite[Theorem 3.12, Proposition 3.19]{MR2033010}). 
Another spectral sequence is the one in \cite[Proposition 1.5 (3)]{MR707162} which 
uses only the existence of a pure Hodge structure on the stratum of an SNC variety.  

By using these spectral sequences, we show that log deformations of $X$ are unobstructed as in \cite[Theorem 4.2]{MR1296351}. 
Then, by using the existence of the Kuranishi family of $X$ and Artin's approximation, we can construct a semistable smoothing $\phi \colon \cX \rightarrow \Delta^1$ of $X$ (cf.\ \cite[Corollary 2.4]{MR1296351}).  

However, if $X$ is not projective, the general fiber of $\phi$ may not be an algebraic variety even when $H^2(X, \cO_X) =0$. 
Indeed, this happens in the examples in Theorem \ref{thm:mainexample}. 

 In \cite[Corollary 7.4]{CLM}, the above result is shown without the assumption (2) for a projective SNC variety. It is quite possible that we can remove the assumption when $X$ is not projective but proper (cf. \cite{Felten:2019aa}, \cite{Felten:2020aa}). 
\end{rem}

\begin{rem}\label{rem: dualizingtrivial}
Let $X$ be a proper SNC variety such that $X = X_1 \cup X_2$. If its dualizing sheaf $\omega_X \simeq \cO_X$, then $D:= X_1 \cap X_2$ should satisfy 
$D \in |\omega_{X_i}^{-1}|$. The converse does not hold in general since $D$ may not be connected. 
However, if $D$ is connected and $D \in  |\omega_{X_i}^{-1}|$, 
then we check that $\omega_X \simeq \cO_X$. 
\end{rem}

In order to study a general fiber of a smoothing of an SNC variety, the following map of Clemens is useful.  

\begin{thm}\label{fact:Clemenscontraction}(cf.\ \cite[Theorem 6.9]{MR0444662}, \cite[Theorems 5.2, 5.4]{MR1808639})
Let $\phi \colon \cX \rightarrow \Delta$ be a proper flat surjective morphism from a complex manifold $\cX$ onto a $1$-dimensional open disk $\Delta$. 
Assume that $\phi$ is smooth over $\Delta \setminus {0}$ and $\phi^{-1}(0) \subset \cX$ is an SNC divisor. Let $\cX_t:= \phi^{-1}(t)$ and, for $k \ge 0$, 
let $\cX_0^{[k]} \subset \cX_0$ be the locus where $k+1$ irreducible components of $\cX_0$ intersect. 

Then we have a continuous map $c_t \colon \cX_t \rightarrow \cX_0$ for $t \neq 0$ 
 such that we have a homeomorphism $c_t^{-1}(p) \approx (S^1)^k$ when $p \in \cX_0^{[k]} \setminus \cX_0^{[k+1]}$ and 
$c_t$ induces a homeomorphism $c_t^{-1}(\cX_0 \setminus \cX_0^{[1]}) \xrightarrow{\approx} \cX_0 \setminus \cX_0^{[1]}$.  
(We call the map $c_t$ the Clemens contraction. )
\end{thm}

\section{Construction of non-K\"{a}hler Calabi-Yau 3-folds with arbitrarily large $b_2$}	

\subsection{Construction of the examples}

First we explain the K3 surface which is essential in the construction of our Calabi-Yau 3-folds.  

Let 
\[
P(3):= \bP^1 \times \bP^1 \times \bP^1
\] and  
$\bP_i:= \bP^1$ be the $i$-th factor of $P(3)$ for $i=1,2,3$. 
Let $\pi_i \colon P(3) \rightarrow \bP_i$ be the $i$-th projection   
and $\cO_{P(3)}(c_1, c_2, c_3):= \bigotimes_{i=1}^3 \pi_i^*\cO_{\bP_i}(c_i)$ a line bundle on $P(3)$ for $c_1, c_2, c_3 \in \bZ$.  

Let $S \subset P(3)$ be a very general $(2,2,2)$-hypersurface, that is, 
a very general element of the linear system $|\cO_{P(3)}(2,2,2)|$.  
Then $S$ is a K3 surface. 
This surface is called a {\it Wehler surface} and studied in several articles (cf.\ \cite{MR1352278}, \cite{MR2828530}, \cite{MR3372313}). 
We shall recall some of its properties. 

 For $1\le i < j \le 3$, the surface $S$ has a covering involution  
$\iota_{ij} \colon S \rightarrow S$ corresponding to the double cover 
$p_{ij} \colon S \rightarrow \bP_i \times \bP_j$ which is induced by 
the projection $\pi_{ij} \colon P(3) \rightarrow \bP_i \times \bP_j$. 
By the Noether-Lefschetz theorem (cf.\ \cite[Proposition 2.27, Theorem 3.33]{MR2449178} or \cite[Theorem 1]{MR2567426}), we see that $\Pic S = \bZ e_1 \oplus \bZ e_2 \oplus \bZ e_3$, where $e_i$ is the fiber class of the elliptic fibration 
$p_i \colon S \rightarrow \bP_i$ for $i=1,2,3$. 
By this, we see that $S$ contains no $(-2)$-curve. 
Indeed, for $D= \sum_{i=1}^3 a_i e_i \in \Pic S$, we have 
\[
D^2 = 2 (\sum_{1 \le i < j \le 3} a_i a_j (e_i \cdot e_j)) =4 (a_1 a_2 + a_1 a_3 + a_2 a_3) \in 4 \bZ
\] by $e_i^2 =0$ and $e_i \cdot e_j =2$ for  $i \neq j$.   	
Hence the nef cone $\Nef S \subset \Pic S$ of $S$ can be described as the positive cone 
\[
\Nef S = \{D \in \Pic S \mid D^2 \ge 0, D \cdot H \ge 0 \},  
\]
where $H:= e_1 + e_2 +e_3$. 
First we need the following claim on the action of the involution $\iota_{ij}$ on $\Pic S$. 

\begin{claim}\label{claim:involutionverygeneral} (cf.\ \cite[Lemma 2.1]{MR1352278})
 Let $i,j,k \in \bZ$ be integers such that $i<j$ and $\{i,j,k \} = \{1,2,3 \}$. Then we have the following.
\begin{enumerate}
\item[(i)] $\iota_{ij}^*(e_i) = e_i$ and $\iota_{ij}^*(e_j) = e_j$,
\item[(ii)] $\iota_{ij}^*(e_k) = 2e_i + 2e_j - e_k$. 
\end{enumerate}
\end{claim}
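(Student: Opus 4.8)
The plan is to exploit the explicit description of $\iota_{ij}$ as the deck transformation of the double cover $p_{ij}\colon S \to \bP_i \times \bP_j$. Writing the defining equation of $S$ as a quadratic in the $\bP_k$-coordinate whose coefficients are $(2,2)$-forms on $\bP_i \times \bP_j$, the involution $\iota_{ij}$ sends $(x,y,z)\in S$ to $(x,y,z')$, where $z,z'$ are the two roots of this quadratic over $(x,y)\in \bP_i\times\bP_j$.

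For part (i), I would note that $\iota_{ij}$ commutes with $p_{ij}$ by construction, hence with $p_i=\pr_i\circ p_{ij}$ and $p_j=\pr_j\circ p_{ij}$; thus $p_i\circ\iota_{ij}=p_i$ and $p_j\circ\iota_{ij}=p_j$. Consequently $\iota_{ij}$ carries each fiber of $p_i$ (resp.\ $p_j$) onto itself and therefore fixes the fiber classes, giving $\iota_{ij}^*(e_i)=e_i$ and $\iota_{ij}^*(e_j)=e_j$.

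For part (ii), the cleanest route is the standard relation $p_{ij}^*(p_{ij})_* D = D+\iota_{ij}^* D$ for divisor classes on the double cover $S$ (for a prime divisor $C$ off the ramification this expresses the full preimage as its two sheets $C+\iota_{ij}^*C$, and the $\iota_{ij}$-invariant case follows trivially). Applying it to $D=e_k$, it remains to compute $(p_{ij})_*e_k$. A general fiber $F$ of $p_k\colon S\to\bP_k$ is obtained by fixing $z=t$ and equals the $(2,2)$-curve in $\bP_i\times\bP_j$ cut out by the equation of $S$ with $z=t$; since $p_{ij}$ forgets the $\bP_k$-coordinate, $p_{ij}|_F$ is an isomorphism onto this curve. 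Hence $(p_{ij})_*e_k=\cO_{\bP_i\times\bP_j}(2,2)$, so $p_{ij}^*(p_{ij})_*e_k=2e_i+2e_j$, and the relation yields $\iota_{ij}^*(e_k)=2e_i+2e_j-e_k$.

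I expect the main obstacle to be the degree bookkeeping in the pushforward: a fiber of $p_k$ maps birationally under $p_{ij}$, whereas a fiber of $p_i$ or $p_j$ is $\iota_{ij}$-invariant and maps $2{:}1$, so one must check that the general $p_k$-fiber meets the ramification properly and maps with degree one. An alternative that avoids the identity is to use that $\iota_{ij}^*$ is an isometry of $\Pic S$ fixing $e_i,e_j$: writing $\iota_{ij}^*(e_k)=a e_i+b e_j+c e_k$ and imposing $\iota_{ij}^*(e_k)\cdot e_i=\iota_{ij}^*(e_k)\cdot e_j=2$ and $\iota_{ij}^*(e_k)^2=0$ forces $a=b=1-c$ and $(1-c)(1+c)=0$; the root $c=1$ (i.e.\ $\iota_{ij}^*=\id$ on $\Pic S$) is excluded because $\iota_{ij}$ changes the $\bP_k$-coordinate and so does not preserve the fibers of $p_k$, leaving $c=-1$ and $a=b=2$.
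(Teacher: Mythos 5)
Your part (i) is the same argument as the paper's (the covering involution preserves each fiber of $p_i$ and of $p_j$). For part (ii), your primary route --- the push--pull identity $p_{ij}^*(p_{ij})_* D = D + \iota_{ij}^*D$ applied to $D = e_k$, together with the computation $(p_{ij})_* e_k = \cO_{\bP_i\times\bP_j}(2,2)$ --- is correct and genuinely different from the paper's. The paper instead works with the function field: it writes $K(S) \simeq \bC(s,t)[u]/(u^2 + \tfrac{f_3}{f_2}u + \tfrac{f_1}{f_2})$, notes $\iota_{12}^{\sharp}(u) = -u - \tfrac{f_3}{f_2}$, deduces $\iota_{12}^*(e_3) \neq e_3$ because $f_3/f_2$ is non-constant on $p_3$-fibers for very general $F_1,F_2,F_3$, and then pins down the class by exactly the intersection-theoretic constraints of your ``alternative'' route. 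Your push--pull argument buys a cleaner computation: it needs no genericity of the coefficients beyond finiteness of $p_{ij}$, and it identifies the class directly instead of excluding one of two roots. But you should state that finiteness explicitly: the identity $p_{ij}^*(p_{ij})_* D = D + \iota_{ij}^* D$ fails when $p_{ij}$ has a one-dimensional fiber, and for very general $S$ finiteness holds because a line contained in $S$ would be a $(-2)$-curve, which the paper has already excluded ($D^2 \in 4\bZ$ for all $D \in \Pic S = \bZ e_1 \oplus \bZ e_2 \oplus \bZ e_3$).

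Your alternative route, by contrast, has a genuine gap at the exclusion of $c=1$. ``$\iota_{ij}$ changes the $\bP_k$-coordinate'' does not imply $\iota_{ij}^*(e_k)\neq e_k$: since the members of the pencil $|e_k|$ are exactly the fibers of $p_k$, what must be ruled out is that $\iota_{ij}$ permutes the fibers of $p_k$ among themselves, not that it fixes each fiber. The paper's Section 4 shows this inference is actually false for special members: for $S = (U_0^2F_1 + U_1^2F_2 = 0)$ the covering involution of $p_{12}$ is $[U_0:U_1]\mapsto[U_0:-U_1]$, which moves the $\bP_3$-coordinate of every general point yet satisfies $\iota_S^*(e_3) = e_3$, because it sends the fiber of $p_3$ over $[U_0:U_1]$ to the fiber over $[U_0:-U_1]$. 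Ruling this behavior out for very general $S$ is precisely where the paper does real work (non-constancy of $f_3/f_2$ on the $p_3$-fibers); note also that for that special surface $p_{12}$ is not finite (the eight lines $E_i$ are fibers), which is why your push--pull identity does not contradict $\iota_S^*(e_3)=e_3$ there. So if you keep the alternative route, the exclusion of $c=1$ needs the genericity argument; your main route avoids it but must invoke finiteness of $p_{ij}$.
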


\begin{proof}[Proof of Claim]  
(i) This follows since $\iota_{ij}$ interchanges 
two points in a fiber $p_{ij}^{-1}(p)$ for a general $p \in \bP_i \times \bP_j$. 

\noindent(ii) 
We shall show that $\iota_{12}^*(e_3) = 2e_1 + 2 e_2 -e_3$. The others can be shown similarly. 

Let  $[S_0: S_1], [T_0: T_1], [U_0: U_1]$ be the coordinates of $\bP_1, \bP_2, \bP_3$. 
Note that $S$ can be described as 
\[
S=   (U_0^2 F_1 + U_1^2 F_2 + U_0 U_1 F_3 =0)  \subset P(3) 
\]
for some very general (2,2)-polynomials $F_1, F_2, F_3 \in H^0(\bP_1 \times \bP_2, \cO(2,2))$ on $\bP_{1} \times \bP_2 = \bP^1 \times \bP^1$.  
Let $s:=S_1/ S_0, t:= T_1/T_0, u:= U_1/ U_0$ be the affine coordinates of the $\bP_1, \bP_2, \bP_3$ respectively. 
Also, for $i=1,2,3$, let $f_i \in \bC[s,t]$ be the dehomogenization of $F_i$.   
Then the function field $K(S)$ of $S$ can be described as 
\[
K(S) \simeq \bC(s,t)[u]/(u^2 + \frac{f_3}{f_2} u + \frac{f_1}{f_2}) 
\]
and $\iota_{12}$ induces an element $\iota_{12}^{\sharp} \in \Gal(K(S)/ K(\bP_1 \times \bP_2))$  determined by 
\[
\iota_{12}^{\sharp}(u) = -u - \frac{f_3}{f_2}.
\]  


By this description of $\iota_{12}^{\sharp}$, we see that 
\begin{equation}\label{eq:iota12action}
\iota_{12}^*(e_3) \neq e_3
\end{equation} since $F_1, F_2, F_3$ are very general and 
$\frac{f_3}{f_2}$ is not constant on the fiber of $p_3 \colon S \rightarrow \bP_3$.  By (\ref{eq:iota12action}) and 
\[
\iota_{12}^*(e_1) \cdot \iota_{12}^*(e_3) = \iota_{12}^*(e_2) \cdot \iota_{12}^*(e_3) = 2, \ \ 
\iota_{12}^*(e_3)^2 =0,
\] 
we check that 
$\iota_{12}^*(e_3) = 2e_1 + 2e_2 - e_3$.  
\end{proof}

Now let $\iota := \iota _{12} \circ \iota_{13}$, that is,  
\[ \iota  \colon S \xrightarrow{\iota_{13}} S \xrightarrow{\iota_{12}} S.
\] 
\begin{claim}\label{claim:autoactionmatrix}
The automorphism $\iota$ induces the linear automorphism $\iota^* \in \Aut(\Pic S)$ 
corresponding to a matrix 
\[
\begin{pmatrix}
1 & 2 & 6 \\
0 & -1 & -2 \\
0 & 2 & 3
\end{pmatrix}, 
\]
that is, we have $\iota^*(e_1) = e_1, \iota^*(e_2) = 2e_1 -e_2 +2 e_3, 
\iota^* (e_3) = 6e_1 -2e_2 +3e_3$. 
\end{claim}

\begin{proof}[Proof of Claim]

Since we have $\iota_{12}^*(e_1) = e_1, \iota_{13}^*(e_1) = e_1$, 
we obtain $\iota^*(e_1) = e_1$. 

We have $\iota_{13}^*(e_2) = 2 e_1 - e_2 + 2e_3$ by Claim \ref{claim:involutionverygeneral}.  
By this and $\iota_{12}^*(e_2) = e_2$, we obtain $\iota^* (e_2) = 2e_1 - e_2 + 2e_3$. 

By a similar computation, we obtain $\iota^* (e_3) = 6e_1 -2e_2 +3e_3$. 
Indeed, we have $\iota_{12}^*(e_3) = 2 e_1 + 2 e_2 -e_3$ and 
$\iota_{13}^*(2e_1 + 2 e_2 - e_3) = 
2e_1 + 2(2e_1 - e_2 + 2 e_3) - e_3 = 6e_1 - 2 e_2 + 3 e_3$. 
\end{proof}

By this claim, for $a \in \bZ$, the $a$-th power $\iota^a \in \Aut S$ of $\iota$ induces 
\[
(\iota^a)^* = 
\begin{pmatrix}
1 & 4a^2 -2a & 4a^2 +2a \\
0 & 1-2a & -2a \\
0 & 2a & 1+2a
\end{pmatrix} \in \Aut (\bZ^3) \simeq \Aut (\Pic S) 
\]	
with respect to the basis $e_1, e_2, e_3 \in \Pic S$ (by induction on $a$ or use JCF). 

\begin{rem}
In \cite[3.4]{MR3372313}, $\Aut (S)$ is studied in detail. They show that $\Aut (S)$ is a free product of 3 cyclic groups of order 2 generated by 
the three involutions $\iota_{12}, \iota_{13}, \iota_{23}$. 
\end{rem}

Now we can construct our Calabi-Yau 3-folds as follows. 

\begin{thm}\label{thm:mainexample}
Let $a \in \bZ$ be a positive integer. 
Then there exists a Calabi-Yau 3-fold $X:=X(a)$ such that 
$b_2(X) = a+3$ and $e(X) = -256 a^2 +32a-224$, where $b_2(X)$ is the 2nd Betti number of $X$ and $e(X)$ is the topological Euler number of $X$. 
\end{thm}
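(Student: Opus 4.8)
The plan is to realize $X(a)$ as a semistable smoothing of an explicit SNC variety $X_0(a)=X_1\cup^{\iota^a}X_2$ built from Corollary~\ref{cor:pushoutSNC}, and then to read off $e(X)$ and $b_2(X)$ from the degeneration. Take $X_2:=P(3)$ with $S\subset X_2$ the $(2,2,2)$-surface, and let $\sigma\colon X_1\to P(3)$ be the blow-up along the $a$ pairwise disjoint smooth fibers $f_1,\dots,f_a\in|e_1|$ of $p_1\colon S\to\bP_1$ together with one further smooth curve $C\subset S$ (specified below). Since every center lies on the smooth divisor $S$, the strict transform $\tilde S\subset X_1$ maps isomorphically onto $S$, and gluing $\tilde S\subset X_1$ to $S\subset X_2$ along $\phi:=\iota^a$ produces a proper SNC variety $X_0(a)$ with double locus $D\simeq S$. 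Because $S\in|-K_{P(3)}|$ and $K_{X_1}+\tilde S=\sigma^*(K_{P(3)}+S)=0$, adjunction on each component gives $\omega_{X_0(a)}\simeq\cO_{X_0(a)}$, i.e.\ hypothesis~(1) of Theorem~\ref{thm:kawamata-namikawa}.

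Next I would check the remaining hypotheses. As $X_1,X_2$ are rational threefolds we have $H^1(\cO_{X_i})=H^2(\cO_{X_i})=0$, giving $H^1(\cO_{X_0(a)^\nu})=0$, and the sequence $0\to\cO_{X_0(a)}\to\cO_{X_1}\oplus\cO_{X_2}\to\cO_S\to0$ together with $H^1(\cO_S)=0$ yields $H^2(\cO_{X_0(a)})=0$. For $d$-semistability I use $\cO_D(X_0(a))\simeq\cN_{\tilde S/X_1}\otimes\phi^*\cN_{S/X_2}$; on $\tilde S\simeq S$ one has $\cN_{S/X_2}=\cO_S(2,2,2)=2H$, $\cN_{\tilde S/X_1}=2H-ae_1-[C]$ (each exceptional divisor restricts to its center class), and $\phi^*\cN_{S/X_2}=(\iota^a)^*(2H)$, so $d$-semistability is equivalent to $[C]=2H-ae_1+(\iota^a)^*(2H)$ in $\Pic S$. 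Using the matrix for $(\iota^a)^*$ this forces $[C]^2=256a^2-32a+192>0$ and $[C]\cdot H>0$; the decisive point is that the twist contributes a term of order $a^2$, keeping $[C]$ in the interior of the positive cone for every $a$ (without the twist $[C]$ would cease to be big for large $a$). Since $S$ carries no $(-2)$-curve, $\Nef S$ is the positive cone, $[C]$ is nef and big, and a general member $C$ of this class is a smooth irreducible curve; hence $X_1$ and $X_0(a)$ exist. Theorem~\ref{thm:kawamata-namikawa} and Remark~\ref{rem:kawamata-namikawa} then give a semistable smoothing $X:=X(a)$, which is a Calabi-Yau threefold once the purity computation below supplies $H^1(\cO_X)=H^2(\cO_X)=0$.

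For the Euler number I would use the Clemens contraction $c_t\colon X\to X_0(a)$ of Fact~\ref{fact:Clemenscontraction}, whose fibers are points over $X_0(a)\setminus D$ and circles over $D$; since $e(S^1)=0$ this gives $e(X)=e(X_1)+e(X_2)-2e(S)$. Here $e(X_2)=e(P(3))=8$ and $e(S)=24$, and because blowing up a smooth curve $Z$ changes the Euler number by $+e(Z)$, with $e(f_i)=0$ and $e(C)=-[C]^2$ (K3 adjunction), one gets $e(X_1)=8+e(C)=8-(256a^2-32a+192)$. Substituting yields $e(X)=-256a^2+32a-224$.

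Finally, for $b_2$ I would analyze the limit mixed Hodge structure on $H^2(X_t)$. As $D\simeq S$ is a K3 surface with $H^1(S)=0$ and there are no triple intersections, the weight spectral sequence shows $H^1(X_t)=0$ and that $H^2(X_t)$ is pure of weight $2$ and of type $(1,1)$; in particular $H^1(\cO_X)=H^2(\cO_X)=0$. Thus $b_2(X)=\dim\mathrm{Gr}^W_2H^2(X_t)=\dim(\ker\rho/\Image\gamma)$, where $\rho\colon H^2(X_1)\oplus H^2(X_2)\to H^2(S)$ is built from the restrictions to the double locus (with the gluing twist $(\iota^a)^*$ on the $X_1$-factor) and $\gamma\colon H^0(S)\to H^2(X_1)\oplus H^2(X_2)$ is the Gysin map $1\mapsto([\tilde S],[S])$. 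Both restrictions have image $\langle e_1,e_2,e_3\rangle\subset H^2(S)$ (note $[C]\in\Pic S$), so $\dim\Image\rho=3$ and $\dim\ker\rho=(a+7)-3=a+4$; $d$-semistability is precisely the assertion $\gamma(1)\in\ker\rho$, and $\dim\Image\gamma=1$, whence $b_2(X)=a+3$. I expect the two middle steps to be the real work: producing a smooth irreducible $C$ in the class dictated by $d$-semistability for every $a$ (where the infinite-order twist $\iota^a$ is indispensable, as it is what keeps $[C]$ big), and the bookkeeping in the limit mixed Hodge structure — incorporating the gluing twist into $\rho$ and using $d$-semistability to place the Gysin class in $\ker\rho$ — so that the dimension count returns exactly $a+3$.
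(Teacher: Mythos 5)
Your construction is the paper's construction: the same blow-up centers $f_1,\dots,f_a$ and the same curve class $[C]=(16a^2-a+4,\,4-8a,\,4+8a)$ (which you derive cleanly from $d$-semistability rather than postulate), the same gluing by $\iota^a$, the same verification of the hypotheses of Theorem \ref{thm:kawamata-namikawa}, and the same Euler-number bookkeeping via the Clemens retraction, all of which check out numerically. The difference, and the problem, is in how you finish: for $b_2$ and for $H^i(\cO_X)=0$ you invoke the limit mixed Hodge structure, purity of weight $2$, and Hodge type $(1,1)$ of $H^2(X_t)$. That machinery (Steenbrink's $E_2$-degeneration of the weight spectral sequence, Clemens--Schmid, and the identification of limit Hodge numbers with fiber Hodge numbers) is established for K\"ahler or projective families, and the whole point of this example is that $\cX_0$, $\cX$, and the fibers $X_t$ are \emph{not} K\"ahler; the paper explicitly remarks (after Lemma \ref{lem:biglinebundle}) that the Clemens--Schmid sequence is not known in this setting. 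So as stated, "the weight spectral sequence shows \dots pure of weight $2$ and of type $(1,1)$; in particular $H^1(\cO_X)=H^2(\cO_X)=0$" is circular: you are using Hodge-theoretic degeneration results whose standard proofs presuppose exactly the K\"ahlerness you later want to disprove. The conclusions are correct, but they need a K\"ahler-free argument: the paper gets $H^1(\cO_X)=H^2(\cO_X)=0$ by upper semicontinuity from $H^i(X_0,\cO_{X_0})=0$, and gets $b_2$ purely topologically from the Leray spectral sequence of the Clemens retraction $c_t\colon X\to X_0$ (Fact \ref{fact:Clemenscontraction}): $R^1(c_t)_*\bZ\simeq\bZ_{X_{12}}$, $R^2(c_t)_*\bZ=0$, and $H^1(X,\bZ)=0$ forces the connecting map $\bZ\to H^2(X_0,\bZ)$ to be injective, so $b_2(X)=b_2(X_0)-1=(a+4)-1$. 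Your count $\dim(\ker\rho/\Image\gamma)=a+3$ is numerically the same thing (Mayer--Vietoris with $H^1(S)=0$ gives $H^2(X_0,\bQ)\simeq\ker\rho$, and $\Image\gamma$ corresponds to the image of that connecting map), but it must be justified through this topological route, not through the limit MHS.

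A second, smaller omission: to call $X$ a Calabi--Yau $3$-fold you also need $\omega_X\simeq\cO_X$, which is not automatic from $\omega_{X_0}\simeq\cO_{X_0}$. The paper deduces it from $H^1(X_0,\cO_{X_0})=0$, $H^1(\cX,\cO_{\cX})=0$ and the injectivity of $\Pic\cX\to\Pic X_0$ (via $H^2(\cX,\bZ)\simeq H^2(X_0,\bZ)$), which forces $\omega_{\cX/\Delta^1}$ to be trivial and hence trivial on every fiber. Your proposal never addresses this step.
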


\begin{proof}
We first construct an SNC variety $X_0(a)$ by gluing two smooth projective varieties $X_1$ and $X_2$ as follows. 
For $c_1, c_2, c_3 \in \bZ$, we let $\cO_{S}(c_1, c_2, c_3):= \cO_{P(3)}(c_1, c_2, c_3)|_S$. 

\noindent{\bf (Construction of $X_1$ and $X_2$)}  
Let $\mu' \colon X'_1 \rightarrow P(3)$ be the blow-up of 
$f_1, \ldots, f_{a}$.  
where $f_1, \ldots, f_{a} \in |\cO_S(1,0,0)|$ are disjoint smooth fibers of 
the elliptic fibration $p_1 \colon S \rightarrow \bP^1$. 
Let $\nu \colon X_1 \rightarrow X'_1$ be the blow-up along the strict transform of $C_a$, 
where $C_a \in |\cO_S(16a^2-a+4, 4-8a, 4+8a)|$ is a general smooth member. 
Note that $\cO_S(16a^2-a+4, 4-8a, 4+8a)$ is ample since we have 
\begin{multline*}
\cO_S(16a^2-a+4, 4-8a, 4+8a)^2 = 4 (8 (16a^2-a +4) + (4-8a)(4+8a)) \\
= 4(64a^2-8a+48) >0
\end{multline*} and $S$ contains no $(-2)$-curve. 
Then we see that $|\cO_S(16a^2-a+4, 4-8a, 4+8a)|$ is free since there is no $\bP^1$ on $S$
 (cf.\ \cite[Proposition 8.1]{MR0364263}, \cite[Chapter 2, Corollary 3.15(ii)]{MR3586372}). 
 Thus we have $\mu:= \mu' \circ \nu \colon X_1 \rightarrow P(3)$. Let $X_2:= P(3)$. 
 
Let $S_2:= S \subset X_2$ and $S_1 \subset X_1$ be the strict transform of $S$ and $i_j \colon S_j \hookrightarrow X_j$ be the inclusions for $j=1,2$, 
and let $\iota_a:= \iota^a \circ \mu|_{S_1}$.  	
By Corollary \ref{cor:pushoutSNC}, we can construct the push-out $X_0(a)$ of  two closed immersions 
 $i_1$ and $i_2 \circ \iota_a$. For simplicity, we write $X_0:= X_0(a)$. 
Then $X_0$ is a proper SNC variety and fits in the following diagram; 
\[
\xymatrix{
 & S_1 \ar[ld]^{\iota_a} \ar@{^{(}->}[r]^{i_1} & X_1 \ar[dd] \\
S_2 \ar@{^{(}->}[d]^{i_2} & & \\
X_2  \ar[rr]& & X_0. 
}
\]	

The SNC variety $X_0$ satisfies the condition of Theorem \ref{thm:kawamata-namikawa} by the following claim. 

\begin{claim}\label{claim:onX_0} 
\begin{enumerate}
\item[(i)] $X_0$ is $d$-semistable. 
 \item[(ii)]  $\omega_{X_0} \simeq \cO_{X_0}$. 
\item[(iii)] $H^1(X_0 , \cO_{X_0}) =0, H^2(X_0^{\nu}, \cO_{X_0^{\nu}}) =0$, 
where $X_0^{\nu} \rightarrow X_0$ is the normalization. 
\end{enumerate}
\end{claim} 
\begin{proof}[Proof of Claim] 
(i) In order to check the $d$-semistability, we shall show that 
\[
\cO_{S_1}(X_0):= \cN_{S_1/ X_1} \otimes (\iota_a)^*\cN_{S_2/X_2} 
\simeq \cO_{S_1}. 
\] 
Let $\mu_1:= \mu|_{S_1} \colon S_1 \xrightarrow{\simeq} S$ and $\cO_{S_1}(c_1, c_2, c_3):= \mu_1^* \cO_S(c_1, c_2, c_3)$ for 
$c_1, c_2, c_3 \in \bZ$. 
Since we have 
\[
\cN_{S_1/X_1} \simeq \cO_{S_1}(2,2,2) \otimes \cO_{S_1}(- (\sum_{i=1}^{a} f_i + C_a)), 
\]
\[
(\iota_a)^* \cN_{S_2/X_2} \simeq 
(\iota_a)^* \cO_S(2,2,2), 
\]
we obtain  
\begin{multline*}
\cO_{S_1}(X_0) \simeq \cO_{S_1}(2,2,2) \otimes \cO_{S_1} (- (\sum_{i=1}^{a} f_i + C_a)) \otimes (\iota_a)^* \cO_S(2,2,2) \\
 \simeq \cO_{S_1}(16a^2 + 4, 4-8a, 4+8a) 
\otimes \cO_{S_1} (- (\sum_{i=1}^{a} f_i + C_a)) 
 \simeq \cO_{S_1}, 
\end{multline*}
thus we obtain (i). 

\noindent(ii) By $S_i \in | \omega_{X_i}^{-1}|$ for $i=1,2$ and Remark \ref{rem: dualizingtrivial}, we see that $\omega_{X_0} \simeq \cO_{X_0}$
(cf.\ \cite[Remark 2.11]{MR707162}). 

\noindent(iii) The exact sequence 
\[
0 \rightarrow \cO_{X_0} \rightarrow \cO_{X_1} \oplus \cO_{X_2} \rightarrow 
\cO_{X_{12}} \rightarrow 0
\]  
implies $H^1(X_0, \cO_{X_0}) =0$, where $X_{12}:= X_1 \cap X_2 \simeq S_i$ for $i=1,2$.  
We have $H^2(X_0^{\nu}, \cO_{X_0^{\nu}}) =0$ since $X_1$ and $X_2$ are rational. 
\end{proof}

By the above and Theorem \ref{thm:kawamata-namikawa}, there exists a semistable smoothing $\phi_a \colon \cX(a) \rightarrow \Delta^1_{\epsilon}$ of $X_0$ over an open disk 
of a sufficiently small radius $\epsilon >0$. 
Let $X(a)$ be a fiber of $\phi_a$ over $t \neq 0$. Note that we do not specify $t$ and 
all such fibers are diffeomorphic. 
 Let $\cX:= \cX(a)$ and $X:= X(a)$ for simplicity.  
 
Then we have $\omega_{X} \simeq \cO_{X}$ since we have $H^1(X_0, \cO_{X_0}) =0$, $H^1(\cX, \cO_{\cX}) =0$ and, by the diagram 
\[
\xymatrix{
H^1(\cX, \cO_{\cX}^*) \ar[r] \ar[d]^{i_0^*} & H^2(\cX, \bZ) \ar[d]^{\simeq} \\
H^1(X_0, \cO_{X_0}^*) \ar[r] & H^2(X_0, \bZ), 
}
\]
 we see that $i_0^*$ is injective, where $i_0 \colon X_0 \hookrightarrow \cX$ is the inclusion.

We also check that $H^i(X, \cO_{X}) =0$ for $i=1,2$ by the upper semicontinuity theorem since $\epsilon$ is small. We have the following claim on the Betti numbers $b_i(X)$ of $X$ for $i=1,2$. 
	
\begin{claim}\label{claim:b1b2}
\begin{enumerate}
\item[(i)] We have $H^1(X, \bZ) =0$, thus $b_1(X) =0$. 
\item[(ii)] $b_2(X) = a +3$.  
\end{enumerate}
\end{claim}

\begin{proof}[Proof of Claim] 
(i) By the exponential exact sequence, we have an exact sequence 
\[
H^0(X, \cO_X) \rightarrow H^0(X, \cO_X^*) \rightarrow H^1(X, \bZ) 
\rightarrow H^1(X, \cO_X). 
\]
This implies (i). 

\noindent(ii) Note that $b_2(X_0) = \rk \Pic X_0$ since we can calculate that $H^i(X_0, \cO_{X_0})=0$ for $i=1,2$ as in Claim \ref{claim:onX_0}(iii). 
Moreover, we see that $\rk \Pic X_0 = a +4$ by the exact sequence 
\begin{multline*}
0 \rightarrow H^1(X_0, \cO^*_{X_0}) \rightarrow H^1(X_1, \cO^*_{X_1}) \oplus 
H^1(X_2, \cO_{X_2}^*) \\
\rightarrow H^1(X_{12}, \cO_{X_{12}}^*) \rightarrow 0,  
\end{multline*}
where the surjectivity follows from the explicit description. 
In order to compute $b_2(X)$, we use the Clemens contraction 
$c_t \colon X \rightarrow X_0$ which satisfies that 
$c_t^{-1}(p) \approx S^1$ for $p \in X_{12}$ and $c_t^{-1}(p) = \{{\rm pt} \}$ 
for $p \notin X_{12}$ as in Theorem \ref{fact:Clemenscontraction}. We see that $R^1 (c_t)_* \bZ_{X} \simeq \bZ_{X_{12}}$ since $X_{12}$ is simply connected 
 and that $R^2 (c_t)_* \bZ =0$. By this and the Leray spectral sequence
\[
H^i(X_0, R^j (c_t)_* \bZ) \Rightarrow H^{i+j} (X, \bZ ),  
\]
we see that $b_2(X) = b_2(X_0) -1 = a +3$. 
Indeed, we have \[
H^0(X_0, R^2 (c_t)_* \bZ) =0, \ \ 
H^1(X_0,  R^1(c_t)_* \bZ) = H^1(X_{12}, \bZ) =0, 
\]
\[
H^2(X_0, (c_t)_* \bZ) \simeq
H^2(X_0, \bZ)
\]  and see that the connecting homomorphism 
\[
\bZ \simeq H^0( X_0, R^1 (c_t)_* \bZ) \rightarrow H^2(X_0, (c_t)_* \bZ) \simeq H^2(X_0, \bZ) 
\]
is non-zero by $H^1(X, \bZ) =0$, and its cokernel is $H^2(X, \bZ)$. 
\end{proof}

We compute the topological Euler number $e(X)$ as follows. 

\begin{claim}\label{claim:Eulernumber}
We have $e(X) = -256 a^2 +32a-224$. 
\end{claim}

\begin{proof}[Proof of Claim] We shall use the product formula of topological Euler numbers 
on an oriented fiber bundle (cf.\ \cite[pp.481, Theorem 1]{MR1325242}) and also an additivity formula 
for the Euler number on a complex algebraic variety (cf.\ \cite[pp.95, Exercise]{Fulton_ITV}). 
Note that we have $e(f_i) =0$ for $i=1, \ldots, a$, 
\[e(C_a) = 2-2 g(C_a) = -(C_a^2) =-256 a^2 +32a -192, 
\] and 
an exceptional divisor of a blow-up along a curve  is a $\bP^1$-bundle. Thus 
we see that $e(X_1) = e( P(3)) -256 a^2 +32a -192 =-256 a^2 +32a -184$ by the above two formulas. 
By this and the exact sequence
\[
0 \rightarrow \bZ_{X_0} \rightarrow \bZ_{X_1} \oplus \bZ_{X_2} \rightarrow \bZ_{X_{12}} \rightarrow 0, 
\]  we see that 
\begin{multline*}
e(X_0) = e(X_1) + e(X_2) - e(X_{12}) = (-256 a^2 +32a-184) + 8 - 24 \\ = -256 a^2 +32a-200. 
\end{multline*}
Since $c_t^{-1}(X_{12}) \rightarrow X_{12}$ is an $S^1$-bundle over a K3 surface, we check that 
\[
e(X) = e(X_0) - e(X_{12}) = -256 a^2 +32a-200-24 =-256 a^2 +32a -224  
\]
by the Leray spectral sequence as in Claim \ref{claim:b1b2}(ii). Indeed, $H^i(X_0, R^j (c_t)_* \bZ) =0$ 
except when $j=0,1$. 
\end{proof}

By these claims, we obtain $X$ as described in the statement of Theorem \ref{thm:mainexample}.
\end{proof}	

\subsection{Some properties of $X(a)$} 

First, our examples have the following Hodge-theoretic property.  
	
\begin{rem}\label{rem:HodgeSymmetry}
Note that the Hodge to de Rham spectral sequence degenerates at $E_1$ 
on our Calabi-Yau 3-fold $X:= X(a)$ (cf.\ \cite[Corollary 11.24]{MR2393625}). 
By this and \cite[Theorem 3.3]{MR3867651}, we check that: 

\begin{prop}
 $X$ has unobstructed deformations. 
\end{prop}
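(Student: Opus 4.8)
The plan is to deduce the statement from the $T^1$-lifting criterion of Kawamata (\cite[Corollary 2]{MR1144440}), following the Bogomolov--Tian--Todorov scheme and feeding in the $E_1$-degeneration recorded in the Fact above. Set $A_n := \bC[t]/(t^{n+1})$, and for a deformation $X_n \rightarrow \Spec A_n$ of $X = X_0$ over $A_n$ put
\[
T^1_{X_n/A_n} := \Ext^1_{\cO_{X_n}}(\Omega^1_{X_n/A_n}, \cO_{X_n}) \simeq H^1(X_n, \Theta_{X_n/A_n}).
\]
By the $T^1$-lifting principle, the Kuranishi family of $X$ is smooth -- equivalently, deformations of $X$ are unobstructed -- provided the restriction map $T^1_{X_n/A_n} \rightarrow T^1_{X_{n-1}/A_{n-1}}$ is surjective for every $n \ge 1$. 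Thus the whole proof reduces to checking this surjectivity.

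First I would promote the Calabi--Yau isomorphism to the relative setting. Since $H^1(X, \cO_X) = 0$, the nowhere-vanishing section trivializing $\omega_X \simeq \cO_X$ lifts over each Artinian base, so $\omega_{X_n/A_n} \simeq \cO_{X_n}$; contracting with a relative holomorphic volume form then yields an isomorphism $\Theta_{X_n/A_n} \xrightarrow{\simeq} \Omega^2_{X_n/A_n}$ compatible with base change to $A_{n-1}$. Hence $T^1_{X_n/A_n} \simeq H^1(X_n, \Omega^2_{X_n/A_n})$, and it is enough to show that
\[
H^1(X_n, \Omega^2_{X_n/A_n}) \longrightarrow H^1(X_{n-1}, \Omega^2_{X_{n-1}/A_{n-1}})
\]
is surjective.

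Next I would run the standard length count. Filtering $A_n$ $t$-adically identifies the graded pieces of the locally free sheaf $\Omega^p_{X_n/A_n}$ with $\Omega^p_{X_0}$, giving $\operatorname{length}_{A_n} H^q(X_n, \Omega^p_{X_n/A_n}) \le (n+1)h^{p,q}$; while the relative de Rham cohomology $\bH^k(X_n, \Omega^{\bullet}_{X_n/A_n})$ is $A_n$-free of rank $b_k(X)$, its formation commuting with base change via the flat Gauss--Manin connection, so $\operatorname{length}_{A_n}\bH^k = (n+1)b_k(X)$. By the Fact above $b_k(X) = \sum_{p+q=k} h^{p,q}$, so comparing $(n+1)b_k(X) = \operatorname{length}_{A_n}\bH^k \le \sum_{p+q=k}\operatorname{length}_{A_n} H^q(X_n, \Omega^p_{X_n/A_n}) \le (n+1)\sum_{p+q=k}h^{p,q}$ forces equality throughout. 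Therefore the relative Hodge--de Rham spectral sequence degenerates at $E_1$ and each $H^q(X_n, \Omega^p_{X_n/A_n})$ has length $(n+1)h^{p,q}$, hence is $A_n$-free of rank $h^{p,q}$. Freeness of $H^1(X_n, \Omega^2_{X_n/A_n})$ together with the base-change compatibility gives the required surjectivity, and the $T^1$-lifting property follows.

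The step I expect to be the main obstacle is the passage from the $E_1$-degeneration on a single smooth fiber to the relative freeness and base-change statement over the thickenings $A_n$: one must rule out any jump in the ranks of the terms $H^q(X_n, \Omega^p_{X_n/A_n})$, which is exactly what the length equality above pins down, and one must justify the $A_n$-freeness and constant rank $b_k(X)$ of the relative de Rham cohomology. The auxiliary inputs are the triviality of $\omega_{X_n/A_n}$, which uses the vanishing $H^1(X, \cO_X) = 0$ established in the proof of Theorem \ref{thm:mainexample}, and the constancy of the Hodge numbers $h^{p,q}$, which follows from the fiberwise $E_1$-degeneration together with the local constancy of the Betti numbers; both are therefore already in hand.
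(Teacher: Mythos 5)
Your proposal is correct and takes essentially the same route as the paper: the paper's proof is exactly the one-line combination of the quoted $E_1$-degeneration Fact with the $T^1$-lifting property, citing \cite[Corollary 2]{MR1144440}, and what you wrote is an unwinding of that citation (relative Calabi--Yau trivialization $\Theta_{X_n/A_n} \simeq \Omega^2_{X_n/A_n}$ using $H^1(X,\cO_X)=0$, freeness of relative de Rham cohomology, and the length count forcing relative $E_1$-degeneration). The only compressed step is the passage from the length equality to $A_n$-freeness and base change of $H^q(X_n,\Omega^p_{X_n/A_n})$, which requires the standard d\'{e}vissage along $0 \to \Omega^p_{X_0} \xrightarrow{t^n} \Omega^p_{X_n/A_n} \to \Omega^p_{X_{n-1}/A_{n-1}} \to 0$ (length alone does not give freeness); this is precisely the content of Kawamata's cited argument, so nothing essential is missing.
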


We also check that $\dim_{\bC} H^i(X, \Omega^j_X) = \dim_{\bC} H^j(X, \Omega^i_X)$ for any $i, j \in \bZ$ as follows. 

We calculate $H^0(X, \Omega^1_X) =0$ by $H^1(X, \bC) =0$ (Claim \ref{claim:b1b2}(i)) and the $E_1$-degeneration. 
We also have $H^0(X, \Omega^2_X) =0$ since we obtain $H^1(X, \cO_X^*) \simeq H^2(X, \bZ)$ 
by considering the exponential exact sequence as in Claim \ref{claim:b1b2}. 
Thus, for $i=1,2$, since we have $H^i(X, \cO_X) =0$, we have the Hodge symmetry on $H^i(X, \bC)$. 

On the direct summands of $H^3(X, \bC)$, we have 
\[
H^j(X, \Omega^{3-j}_{X}) \simeq H^{3-j}(X, \Omega^j_X) 
\]
by the Serre duality and $\omega_X \simeq \cO_X$. 
By these, we have the required equality on $H^3(X, \bC)$. 
Thus we can not judge the non-projectivity of $X$ from the Hodge numbers. 

It might be possible to show the $\partial \bar{\partial}$-lemma on $X$ as in \cite{MR4085665}. 
\end{rem}

It may be interesting to study the fundamental group $\pi_1(X)$, the second Chern class $c_2(X)$, etc. 
For the fundamental group, we have the following. 

\begin{prop}\label{prop:simplyconnected}
$X=X(a)$ is simply connected. 
\end{prop}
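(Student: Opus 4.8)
The plan is to compute $\pi_1(X)$ from the topology of the semistable smoothing, using the Clemens contraction $c_t \colon X \rightarrow X_0$ of Fact \ref{fact:Clemenscontraction}. Since $c_t$ is a diffeomorphism over $X_0 \setminus X_{12}$ and has an $S^1$-fiber over each point of the double locus $X_{12} = X_1 \cap X_2$, I would cover $X$ by two open sets $U_1, U_2$, where $U_i = c_t^{-1}(V_i)$ for a suitable open neighborhood $V_i$ of $X_i$ in $X_0$. Then $U_i$ is homotopy equivalent to $X_i \setminus S_i$, and $U_1 \cap U_2$ deformation retracts onto the unit circle bundle $E$ of the normal bundle $\cN_{S_i/X_i}$ over the K3 surface $X_{12}$. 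The key point of this set-up is that the vanishing circle $c_t^{-1}(p) \approx S^1$ (for $p \in X_{12}$) is, under each inclusion $E \hookrightarrow U_i$, the meridian loop $\gamma_i$ around the smooth divisor $S_i \subset X_i$; this is the standard local picture of the semistable model $\{xy = t\}$ near the double locus.

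Next I would run van Kampen's theorem on $X = U_1 \cup U_2$ (note $U_1 \cap U_2 \simeq E$ is connected, as $E$ fibers over the connected $X_{12}$). Since $X_1$ and $X_2$ are rational, being blow-ups of $P(3)$, they are simply connected, and the double locus $X_{12} \simeq S$ is a K3 surface, hence also simply connected. For a smooth connected divisor $S_i$ in the simply connected manifold $X_i$, the complement group $\pi_1(X_i \setminus S_i) = \pi_1(U_i)$ is normally generated by the meridian $\gamma_i$, because $\pi_1(U_i) \twoheadrightarrow \pi_1(X_i) = 1$ has kernel equal to the whole group and equal to the normal closure of $\gamma_i$. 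Therefore $\pi_1(X)$ is generated by the images of $\pi_1(U_1)$ and $\pi_1(U_2)$, each of which is a normal closure of the corresponding meridian; since both meridians are the image of the single fiber class $\gamma \in \pi_1(U_1 \cap U_2)$, I conclude that $\pi_1(X)$ is the normal closure of the one element $g$, the image of $\gamma$ in $\pi_1(X)$.

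It then remains to show $g = 1$, i.e.\ that the meridian $\gamma_1$ bounds a disk inside $X_1 \setminus S_1$. Here I would exploit the blow-up $\mu \colon X_1 \rightarrow P(3)$ along $C_a$: let $E_C \subset X_1$ be the exceptional divisor over $C_a$, a $\bP^1$-bundle over $C_a$, and let $B$ be a fiber of $E_C \rightarrow C_a$ over a general point of $C_a$ (avoiding the finitely many points over $C_a \cap \bigcup_i f_i$). Since $S$ is smooth and contains each blown-up curve with multiplicity one, the strict transform satisfies $S_1 = \mu^* S - \sum_{i=1}^{a} E_{f_i} - E_C$, so that $B \cdot \mu^* S = 0$, $B \cdot E_{f_i} = 0$, and $B \cdot E_C = -1$, giving $B \cdot S_1 = 1$. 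Hence $B \cong \bP^1$ meets $S_1$ transversally in a single point $q$, and $B \setminus \{q\} \cong \bC$ is a disk contained in $X_1 \setminus S_1$ whose boundary circle is exactly the meridian $\gamma_1$. Thus $\gamma_1 = 1$ in $\pi_1(U_1)$, so $g = 1$, and therefore $\pi_1(X) = 1$.

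The hard part will be making the topological decomposition rigorous from Fact \ref{fact:Clemenscontraction}: one must verify that $U_i \simeq X_i \setminus S_i$, that $U_1 \cap U_2$ deformation retracts onto the circle bundle $E$, and, most crucially, that the vanishing cycle $c_t^{-1}(p)$ is genuinely identified with the meridian of $S_i$ in each component, which is what licenses the normal-generation conclusion in van Kampen's theorem. The remaining ingredients, namely the simple connectivity of $X_1$, $X_2$ and $X_{12}$, the normal generation of $\pi_1(X_i \setminus S_i)$ by the meridian, and the intersection computation $B \cdot S_1 = 1$, are routine once the decomposition is in place.
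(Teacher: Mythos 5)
Your proposal is correct, and it shares the paper's overall skeleton---the Clemens-contraction decomposition of $X$ into two pieces homotopy equivalent to $X_i \setminus X_{12}$, intersecting in an $S^1$-bundle over $X_{12}$, followed by van Kampen---but the way you handle the two pieces is genuinely different and more economical. The paper (Claim \ref{claim:pi1complement}) computes both groups explicitly: $\pi_1(X_2 \setminus X_{12}) \simeq \bZ/2\bZ$ via Nori's theorem and a Gysin sequence (the point being that $[S]$ is $2$-divisible in $\Pic P(3)$), and $\pi_1(X_1 \setminus X_{12}) = \{1\}$ by an iterated van Kampen argument over tubular neighborhoods of the exceptional divisors; it then needs a further claim that $\pi_1(\tilde{U}_{12}) \rightarrow \pi_1(\tilde{U}_2)$ is surjective in order to collapse the amalgamated product. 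You bypass all of this: since $X_1$, $X_2$ are simply connected, each $\pi_1(X_i \setminus S_i)$ is the normal closure of its meridian, both meridians are images of the single vanishing-circle class, so $\pi_1(X)$ is the normal closure of one element $g$, which you kill with the explicit disk inside $B \setminus \{q\}$, where $B$ is a fiber of $E_C$ meeting $S_1$ transversally once (your computation $B \cdot S_1 = 1$ is right, and since $S_1 \cap E_C$ is a section of $E_C \rightarrow C_a$ the intersection is indeed a single transverse point; also, meridians at different points of the connected divisor $S_1$ are conjugate, so killing one kills your $\gamma_1$). This is the same geometric mechanism that powers the paper's proof on the $X_1$ side---exceptional fibers over curves lying on $S$ meet the strict transform $S_1$ exactly once---but packaged as a direct null-homotopy. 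What your route buys is brevity and robustness: no Nori, no homology diagrams, no computation of $\pi_1(X_2 \setminus X_{12})$ (which is in fact $\bZ/2\bZ$, not trivial---this is exactly why one cannot argue symmetrically on the $X_2$ side, and why your reduction to a single normally generating class is the right move). What the paper's route buys is the explicit determination of the groups $\pi_1(X_i \setminus X_{12})$ themselves, which is finer information. The one point you rightly flag as needing care---that the vanishing circle $c_t^{-1}(p)$ maps to the meridian of $S_i$ under each inclusion---is the standard local statement for the semistable model $\{xy=t\}$ and is implicitly used by the paper as well, so your proof is complete once that is written out.
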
 
\begin{proof} 
Let $V_i \subset X_i$ be a tubular neighborhood of $X_{12}$ for $i=1,2$ 
which can be regarded as a $\Delta^1$-bundle over $X_{12}$. And let $U_1:= X_1 \cup V_2$ and $U_2:= X_2 \cup V_1$. 
We check that $\pi_1(X_0) = \{1 \}$ by applying van Kampen's theorem to the open covering $X_0 = U_1 \cup U_2$.

Note that $\tilde{X}_{12}:= c_t^{-1}(X_{12}) \rightarrow X_{12}$ is an $S^1$-fibration and, from the homotopy exact sequence, 
we see that $\pi_1(\tilde{X}_{12})$ is a cyclic group generated by the $S^1$-fiber class. 
Let $\tilde{X}_i:= c_t^{-1}(X_i)$ for $i=1,2$ and 
consider a neighborhood $ \tilde{V}_i := c_t^{-1}(V_i) \subset \tilde{X}_i$ of $\tilde{X}_{12}$ for $i=1,2$. 
Let $\tilde{U}_1 := \tilde{X}_1 \cup \tilde{V}_{2}$, $\tilde{U}_2:= \tilde{X}_2 \cup \tilde{V}_1$ and $\tilde{U}_{12}:= \tilde{U}_1 \cap \tilde{U}_2$. 
Note that we can regard $\tilde{V}_1 \cup \tilde{V}_2$ as an annulus bundle over $X_{12}$. By this, we see that $\tilde{U}_i$ is homotopic to $X_i \setminus X_{12}$ for $i=1,2$. 
The following claim is important.    

\begin{claim}\label{claim:pi1complement} 
Let $X_i':= X_i \setminus X_{12}$ for $i=1,2$. Then we have $\pi_1(X_1') = \{1 \}$ and $\pi_1(X_2') \simeq \bZ / 2\bZ$. 
\end{claim}

\begin{proof}[Proof of Claim]
We check that $\pi_1(X_2')$ is  abelian by Nori's result (cf. \cite[Corollary 2.10]{MR732347}) as follows. 
By \cite[p.311, Proposition]{MR1282219}, we see that $\pi_1(X_2') \simeq \pi_1(L^*)$, 
where $L$ is the total space of $\cO_{X_2}(X_{12})$ and $L^* \subset L$ is the complement of the zero section.  
Hence the homotopy exact sequence can be written as 
\[
\pi_1(\bC^*) \rightarrow \pi_1 (X_2') \rightarrow \pi_1(X_2) \rightarrow 1
\]
and this implies that $\pi_1(X_2')$ is abelian by $\pi_1(\bC^*) \simeq \bZ$ and $\pi_1(X_2) = \{1 \}$. 

Thus we compute $\pi_1 (X_2') \simeq H_1(X_2', \bZ) \simeq \bZ / 2 \bZ$ 
by the Gysin long exact sequence
\[
\cdots \rightarrow H_2(X_2, \bZ) \rightarrow H_0(X_{12}, \bZ) \rightarrow H_1 (X_2', \bZ) \rightarrow H_1 (X_2, \bZ) \rightarrow \cdots  
\] 
as in \cite[p.46, (2.13)]{MR1194180}.

Let $E'_j:= E_j \setminus X_{12}$ for $j=1, \ldots, a$ and $F' := F \setminus X_{12}$ be the open subsets of 
$\mu$-exceptional divisors for the blow-up $\mu \colon X_1 \rightarrow X_2$.  
Note that $(X_1')\setminus (E_2' \cup \cdots \cup E'_a \cup F') \simeq X_2'$ since 
$X_1 \rightarrow X_2 = P(3)$ is the blow-up along $f_1, \ldots, f_a$ and the strict transform of $C_a$. 
Note also that $E'_j$ and $F'$ are $\bC$-bundle over the blow-up centers $f_1, \ldots, f_a$ and $C_a$ respectively. 
By these, we compute that $\pi_1(X_1') = \{1 \}$ by van Kampen's theorem as follows. 
Let $W_j' \subset X_1'$ be a tubular neighborhood of $E'_j$ for $j=1, \ldots, a$. 

We compute that 
\[
\pi_1 (X_1' \setminus (E_2' \cup \cdots \cup E'_a \cup F') ) \simeq \pi_1(X_2') \ast_{\pi_1(W_1' \setminus E'_1)} \pi_1(W_1') = \{1 \}  
\] 
as follows: Note that $W_1'$ and $W'_1 \setminus E'_1$ can be regarded as a $\Delta^1$-bundle and a $(\Delta^1)^*$-bundle over $E'_1$, 
where $(\Delta^1)^*:= \Delta^1 \setminus \{0 \}$. 
Then we check that $\pi_1(W_1' \setminus E_1') \rightarrow \pi_1(W_1')$ is surjective and its kernel $K \simeq \bZ$ maps surjectively to $\pi_1(X_2')$ by
 $\mu_* \colon \pi_1(W_1' \setminus E'_1) \rightarrow \pi_1(X_2')$. The latter surjectivity follows from a commutative diagram 
\[
\xymatrix{
 H_0(X_{12}, \bZ) \ar[r]  & H_1(X_2' , \bZ) \ar[r] & 0  \\
 H_0(E_1', \bZ) \ar[r] \ar[u] & H_1(W_1' \setminus E_1', \bZ) \ar[u]
 }
\]
as in \cite[p.46, (2.13)]{MR1194180} since a generator of $H_0(E_1', \bZ)$ is sent to that of $H_1(X_2', \bZ)$. 
Hence we see that $X_1' \setminus (E_2' \cup \cdots \cup E'_a \cup F')$ is simply connected. 

Similarly, we check that the fundamental group does not change if we add divisors $E'_2, \ldots ,E'_a, F' \subset X'_1$.  
In particular, we have $\pi_1 (X_1') = \{1 \}$. 
\end{proof}

By Claim \ref{claim:pi1complement} and the isomorphism 
\[
\pi_1(X) \simeq \pi_1 (\tilde{U}_1) \ast_{\pi_1(\tilde{U}_{12})} \pi_1(\tilde{U}_2) \simeq \pi_1 (X_1') \ast_{\pi_1(\tilde{X}_{12})} \pi_1(X_2'),  
\]  
we obtain $\pi_1(X) = \{1 \}$ since we have the following claim: 

\begin{claim} 
$\pi_1(\tilde{U}_{12}) \rightarrow \pi_1(\tilde{U}_2)$ is surjective. 
\end{claim}

\begin{proof}[Proof of Claim] 
Since we have $\tilde{U}_2 = \tilde{U}_{12} \cup (\tilde{X}_2 \setminus \tilde{X}_{12})$, 
we have 
\[
\pi_1(\tilde{U}_2) \simeq \pi_1(\tilde{U}_{12}) \ast_{\pi_1 (\tilde{V}_2 \setminus \tilde{X}_{12})} \pi_1 (\tilde{X}_2 \setminus \tilde{X}_{12}). 
\]
Since $\tilde{X}_2 \setminus \tilde{X}_{12} \simeq X_2 \setminus X_{12} = X_2'$ and $\tilde{V}_2 \setminus \tilde{X}_{12} \simeq V_2 \setminus X_{12}=: V_2'$, 
it is enough to show the surjectivity of 
\[
(\iota_{V_2'})_* \colon \pi_1(V_2') \rightarrow \pi_1 (X_2'). 
\] 
Note that $V_2'$ is a $(\Delta^1)^*$-bundle over $X_{12}$ and $\pi_1(V_2')$ is a cyclic group,  
thus $\pi_1(X_2')$ and $\pi_1(V_2')$ are abelian. Hence the surjectivity of $(\iota_{V_2'})_*$ follows from the following commutative diagram with exact rows as in \cite[p.46, (2.13)]{MR1194180}: 
\[
\xymatrix{
H_2(V_2, \bZ) \ar[r] \ar[d] & H_0(X_{12}, \bZ) \ar[r] \ar[d] & H_1(V_2 \setminus X_{12}, \bZ) \ar[r] \ar[d] & 0 \\
H_2(X_2, \bZ) \ar[r]  & H_0(X_{12}, \bZ) \ar[r]  & H_1(X_2 \setminus X_{12}, \bZ) \ar[r] & 0. 
}
\]
\end{proof}
 This completes the proof of Proposition \ref{prop:simplyconnected}. 
\end{proof}

An anonymous referee pointed out the following bimeromorphic unboundedness of our examples. 

\begin{rem}\label{rem:nonbimeromorphic}
This is based on the referee's comment. 
Let $a \neq a'$ be positive integers. Then $X(a)$ and $X(a')$ are not bimeromorphic as follows. 

Suppose that they are bimeromorphic and let $\phi \colon X(a) \dashrightarrow X(a')$ be a bimeromorphic map. 
Let $\nu \colon \tilde{X} \rightarrow X(a)$ be a resolution of the indeterminacy of $\phi$ which induces a bimeromorphic morphism 
$\mu \colon \tilde{X} \rightarrow X(a')$. 
We see that $\phi$ is an isomorphism in codimension 1 since $\omega_{X(a)}$ and $\omega_{X(a')}$ are trivial. 
Then we have the push-forward homomorphism $\phi_*:=\nu_* \circ \mu^*: \Pic X(a) \rightarrow \Pic X(a')$. 
We see that $\phi_*$ is an isomorphism since $\phi$ is an isomorphism in codimension one. 
Hence $\rk \Pic X(a) = \rk \Pic X(a')$ and this is a contradiction. 

Hence our examples show the bimeromorphic unboundedness of non-K\"{a}hler Calabi-Yau 3-folds. 
We are not sure whether examples of Clemens--Friedman are bimeromorphically unbounded or not. 
(We do not know whether a bimeromorphic map preserves the Betti number of non-K\"{a}hler Calabi-Yau 3-folds.)
\end{rem}

\subsection{On non-projectivity of $X$}

In this section, we check the non-projectivity of the SNC variety $X_0$ and the Calabi-Yau 3-fold $X$ which are
constructed in Theorem \ref{thm:mainexample}. 

Hironaka \cite{MR0139182} constructed a degeneration of a projective manifold to a proper manifold which is non-projective. 
Thus we can not judge non-projectivity of a general fiber from non-projectivity of a central fiber. 
We use the following lemma to see the non-projectivity of a general fiber of the smoothing. 

\begin{lem}\label{lem:biglinebundle}
Let $\phi \colon \cX \rightarrow \Delta^1$ be a semistable smoothing of a proper SNC variety $\cX_0$ with $\omega_{\cX_0} \simeq \cO_{\cX_0}$ such that some fiber $\cX_t$ of $\phi$ over $t \neq 0$ is a projective Calabi-Yau $n$-fold. 
Assume that $\cX_0$  has only two projective irreducible components $X_1, X_2$ and $X_{12}:= X_1 \cap X_2$ is a simply connected Calabi-Yau $(n-1)$-fold (Note that $\cX_0$ may not be projective).  

Then there exists a big line bundle $\cL_0$ on $\cX_0$ such that 
$h^0(X_i, \cL_0|_{X_i}) >0$ for $i = 1,2$. 
\end{lem}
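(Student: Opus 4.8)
The plan is to transport an ample class from the projective general fiber $\cX_t$ to the central fiber $\cX_0$ through the total space, and then to read off effectivity on the two components from the geometry of the double locus. Since $\cX_t$ is a projective Calabi--Yau $n$-fold with $n\ge 3$, we have $H^1(\cX_t,\cO_{\cX_t})=H^2(\cX_t,\cO_{\cX_t})=0$, so the exponential sequence gives $c_1\colon \Pic(\cX_t)\xrightarrow{\simeq} H^2(\cX_t,\bZ)$; fix an ample line bundle $\cL_t$. Next I would extend its class. As $\cX$ retracts onto $\cX_0$, we have $H^2(\cX,\bZ)\simeq H^2(\cX_0,\bZ)$, and I claim $i_t^*\colon H^2(\cX,\bZ)\to H^2(\cX_t,\bZ)$ is surjective: this is exactly the computation in the proof of Theorem \ref{thm:mainexample}, since the Clemens contraction $c_t$ (Fact \ref{fact:Clemenscontraction}) and the hypothesis that $X_{12}$ is simply connected give $R^1(c_t)_*\bZ\simeq \bZ_{X_{12}}$ and $H^1(X_{12},\bZ)=0$, which force $i_t^*$ onto via the Leray spectral sequence. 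Hence $c_1(\cL_t)=i_t^*\alpha$ for some $\alpha\in H^2(\cX,\bZ)$. To realize $\alpha$ by a line bundle $\cL$ on $\cX$ I must kill its image in $H^2(\cX,\cO_\cX)$; since every fiber has $H^2(\cX_s,\cO_{\cX_s})=0$ (here one uses $H^2(\cX_0,\cO_{\cX_0})=0$, which holds because the components carry no $H^2(\cO)$ and $X_{12}$ is Calabi--Yau), we get $R^2\phi_*\cO_\cX=0$ and, over the Stein disk, $H^2(\cX,\cO_\cX)=0$, so $\alpha$ lifts to $\cL\in\Pic(\cX)$ with $\cL|_{\cX_t}=\cL_t$. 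Set $\cL_0:=\cL|_{\cX_0}$.

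Bigness of $\cL_0$ I would get for free from semicontinuity: $\phi_*\cL^{\otimes m}$ is coherent on $\Delta^1$ and the fiberwise $h^0$ is upper semicontinuous, so $h^0(\cX_0,\cL_0^{\otimes m})\ge h^0(\cX_t,\cL_t^{\otimes m})\sim \tfrac{(\cL_t^n)}{n!}m^n$, which grows like $m^n$; hence $\cL_0$ is big on the $n$-dimensional $\cX_0$.

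For the sections on each component I would first restrict to the double locus: $\cL_0|_{X_{12}}$ is big (indeed ample) on $X_{12}$, because $c_1(\cL_t)$ restricts to the Clemens circle bundle $c_t^{-1}(X_{12})\to X_{12}$, whose Euler class vanishes by $d$-semistability, so it descends to a positive class. Then on each component I would use adjunction: from $\omega_{\cX_0}\simeq\cO_{\cX_0}$ one gets $\cO_{X_i}(-X_{12})\simeq\omega_{X_i}$, giving
\[
0\to \omega_{X_i}\otimes \cL_0|_{X_i}\to \cL_0|_{X_i}\to \cL_0|_{X_{12}}\to 0 .
\]
Provided $\cL_0|_{X_i}$ is nef and big, Kawamata--Viehweg vanishing yields $H^1(X_i,\omega_{X_i}\otimes\cL_0|_{X_i})=0$, so the nonzero sections of the big bundle $\cL_0|_{X_{12}}$ lift to $X_i$ and $h^0(X_i,\cL_0|_{X_i})>0$. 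The nef-and-bigness of each $\cL_0|_{X_i}$ in turn I would deduce by transporting ampleness: for a curve $C\subset X_i$ one has $\cL_0|_{X_i}\cdot C=\cL\cdot C=\cL_t\cdot C'>0$ for the effective curve $C'$ on $\cX_t$ with $[C]=c_{t*}[C']$, and $(\cL_0|_{X_i})^n>0$ then follows from $\sum_i(\cL_0|_{X_i})^n=(\cL_t^n)>0$ together with this nefness.

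I expect the main obstacle to be the extension step: both the surjectivity of $i_t^*$ and, more importantly, the vanishing of the obstruction in $H^2(\cX,\cO_\cX)$ needed to turn the topological class $\alpha$ into an honest line bundle on $\cX$ (in the general case one must replace the naive vanishing $H^2(\cX_0,\cO_{\cX_0})=0$ by the local invariant cycle theorem for the semistable degeneration). The second delicate point is making the positivity reach \emph{both} components: the Mayer--Vietoris sequence $0\to\cL_0\to\cL_0|_{X_1}\oplus\cL_0|_{X_2}\to\cL_0|_{X_{12}}\to 0$ together with bigness only yields effectivity on one $X_i$ a priori, and it is the simple connectivity of $X_{12}$ --- forcing the degeneration to be balanced so that ampleness of $\cL_t$ descends symmetrically to the double locus --- that lets one bootstrap to the other component.
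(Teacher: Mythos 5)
Your first half (surjectivity of $H^2(\cX,\bZ)\to H^2(\cX_t,\bZ)$ via the Clemens retraction and simple connectedness of $X_{12}$, lifting the ample class to $\cL\in\Pic\cX$, and bigness of $\cL|_{\cX_0}$ by upper semicontinuity) is exactly what the paper does. But the second half --- producing sections on \emph{both} components --- has a genuine gap, in two places. First, the claim that $\cL_0|_{X_{12}}$ is ample (or big) is unjustified: the vanishing of the Euler class of the circle bundle $c_t^{-1}(X_{12})\to X_{12}$ only identifies $H^2(c_t^{-1}(X_{12}),\bZ)$ with $H^2(X_{12},\bZ)$; it says nothing about positivity, because $c_t^{-1}(X_{12})$ is a real hypersurface of $\cX_t$, not a complex submanifold, so ampleness of $\cL_t$ does not restrict. (Note also that $\cL$ is only determined up to twists by $\cO_{\cX}(X_1)$, and such a twist changes $\cL_0|_{X_{12}}$ by $\cN_{X_{12}/X_1}^{\vee}$, so ``the'' restriction to $X_{12}$ is not even well defined by $\cL_t$.) Second, the nefness transport fails: for a curve $C\subset X_i$ there is in general no \emph{effective} curve $C'\subset\cX_t$ with $c_{t*}[C']=[C]$ --- curve classes on the central fiber (e.g.\ exceptional curves of $X_1$, or curves inside $X_{12}$) need not be represented by effective cycles on the general fiber --- so $\cL_0|_{X_i}\cdot C>0$ does not follow. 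With both of these claims gone, the adjunction/Kawamata--Viehweg step has nothing to stand on, and your closing paragraph acknowledges rather than closes this hole.

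The missing idea is to lift \emph{sections}, not positivity. The paper shows the restriction $H^0(\cX,\cL)\to H^0(\cX_t,\cL_t)$ is surjective: in the exact sequence relating these groups the connecting map lands in $H^1(\cX,\cL\otimes\cO_{\cX}(-\cX_t))\simeq H^1(\cX,\cL)$ (using $\cO_{\cX}(-\cX_t)\simeq\cO_{\cX}$), and this map is surjective (as $H^1(\cX_t,\cL_t)=0$) between finite-dimensional spaces, hence an isomorphism, hence injective. Now lift a basis $s_0,\dots,s_M$ of $H^0(\cX_t,\cL_t)$, set $m_i:=\min_j \mult_{X_i}(Z(s_j))$, and twist: $\cL':=\cL\otimes\cO_{\cX}(-m_1X_1-m_2X_2)$. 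Since $\cO_{\cX}(X_1+X_2)\simeq\cO_{\cX}$ and $\cO_{\cX}(X_2)|_{\cX_t}\simeq\cO_{\cX_t}$, one still has $\cL'|_{\cX_t}\simeq\cL_t$, but now the induced sections $s'_j$ have base locus containing neither $X_1$ nor $X_2$, so a general $s'\in H^0(\cX,\cL')$ vanishes identically on neither component. Then $\cL_0:=\cL'|_{\cX_0}$ is big (semicontinuity, as you argued) and satisfies $h^0(X_i,\cL_0|_{X_i})>0$ for $i=1,2$. No positivity on $X_{12}$ or nefness on $X_i$ is ever needed --- indeed the paper's own Proposition \ref{prop:nonprojective} shows that line bundles effective on both components can restrict to a mere fiber class on $X_{12}$, so your intermediate claims are not just unproved but the wrong thing to aim for.
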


\begin{proof}
We first need the following. 
\begin{claim}\label{claim:H2surjective}
The restriction homomorphism $\gamma \colon H^2(\cX, \bZ) \rightarrow H^2(\cX_t, \bZ)$ is surjective for $t \neq 0$. 
\end{claim}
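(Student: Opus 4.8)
The plan is to compare the restriction map $\gamma$ with the pullback along the Clemens contraction, and then to run a Leray spectral sequence argument of the same shape as in Claim \ref{claim:b1b2}(ii).

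First I would invoke Fact \ref{fact:Clemenscontraction} to obtain the retraction $c \colon \cX \to \cX_0$ and set $c_t := c|_{\cX_t}$. Since $\phi$ is a semistable smoothing over the disc $\Delta^1$, the inclusion $i_0 \colon \cX_0 \hookrightarrow \cX$ is a deformation retract with retraction $c$, so $c^* \colon H^2(\cX_0, \bZ) \to H^2(\cX, \bZ)$ is an isomorphism. Writing $j_t \colon \cX_t \hookrightarrow \cX$ for the inclusion, I have $\gamma = j_t^*$ and $c \circ j_t = c_t$, hence $\gamma \circ c^* = c_t^*$. As $c^*$ is an isomorphism, it therefore suffices to prove that
\[
c_t^* \colon H^2(\cX_0, \bZ) \to H^2(\cX_t, \bZ)
\]
is surjective.

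Next I would compute the higher direct image sheaves of $\bZ$ under $c_t$ from the fibre description in Fact \ref{fact:Clemenscontraction}. Because $\cX_0 = X_1 \cup X_2$ has only two irreducible components, the multiple locus is $\cX_0^{[1]} = X_{12}$ and $\cX_0^{[k]} = \emptyset$ for $k \ge 2$; thus $c_t$ is a diffeomorphism over $\cX_0 \setminus X_{12}$ and an $S^1$-bundle over $X_{12}$. This gives $(c_t)_* \bZ \simeq \bZ_{\cX_0}$ (connected fibres), $R^2 (c_t)_* \bZ = 0$, and $R^1(c_t)_* \bZ \simeq \bZ_{X_{12}}$, the last isomorphism using that $X_{12}$ is simply connected so that the monodromy of the $S^1$-bundle is trivial, exactly as in Claim \ref{claim:b1b2}(ii).

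Then I would feed these into the Leray spectral sequence $H^i(\cX_0, R^j(c_t)_* \bZ) \Rightarrow H^{i+j}(\cX_t, \bZ)$. In total degree $2$ the relevant terms are $E_2^{2,0} = H^2(\cX_0, \bZ)$, $E_2^{1,1} = H^1(X_{12}, \bZ) = 0$, and $E_2^{0,2} = H^0(\cX_0, R^2(c_t)_*\bZ) = 0$, the middle term vanishing precisely by simple connectivity of $X_{12}$. Consequently $H^2(\cX_t, \bZ) = E_\infty^{2,0}$, a quotient of $E_2^{2,0} = H^2(\cX_0, \bZ)$ via the edge homomorphism; and since $(c_t)_*\bZ \simeq \bZ_{\cX_0}$, this edge homomorphism is exactly $c_t^*$. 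Hence $c_t^*$ is surjective, which completes the argument. The only genuinely delicate point is the identification $R^1(c_t)_*\bZ \simeq \bZ_{X_{12}}$ together with the vanishing $E_2^{1,1} = 0$; both rest on the hypothesis that $X_{12}$ is simply connected, so that is where this assumption is really used.
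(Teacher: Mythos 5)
Your proof is correct and follows essentially the same route as the paper: identify $\gamma$ with $c_t^*$ via the Clemens retraction, then deduce surjectivity from the Leray spectral sequence using $R^2(c_t)_*\bZ = 0$ (no triple points) and $R^1(c_t)_*\bZ \simeq \bZ_{X_{12}}$ with $H^1(X_{12},\bZ)=0$ by simple connectivity. Your extra remarks (the deformation-retract property of $c$ and the edge-homomorphism identification) just make explicit what the paper leaves implicit.
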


\begin{proof}[Proof of Claim] 
By the Clemens contraction $c_t \colon \cX_t \rightarrow \cX_0$ as in Fact \ref{fact:Clemenscontraction}, we may regard $\gamma$ as 
\[
c_t^* \colon H^2(\cX_0, \bZ) \rightarrow H^2(\cX_t, \bZ)
\]
This is surjective since we have 
$
H^1(\cX_0, R^1 (c_t)_* \bZ) =0
$ 
 and $H^0(\cX_0, R^2 (c_t)_* \bZ)=0$. Indeed, we have $R^2 (c_t)_* \bZ =0$ 
 since $\cX_0$ has no triple point, and we see that $R^1 (c_t)_* \bZ \simeq 
 \bZ_{X_{12}}$ since $X_{12}$ is simply connected. Thus we can use the Leray spectral sequence as in Claim \ref{claim:b1b2}. 
\end{proof}

Since we have $h^i(\cX, \cO_{\cX}) =0$ and $h^i(\cX_t, \cO_{\cX_t}) =0$ for $i=1,2$, 
we have $\Pic \cX \simeq H^2(\cX, \bZ)$ and $\Pic \cX_t \simeq H^2(\cX_t, \bZ)$ by the exponential exact sequence. 
Let $\cL_t$ be a very ample line bundle on $\cX_t$. 
By the above and Claim \ref{claim:H2surjective}, there exists a line bundle $\cL$ on $\cX$ such that $\cL|_{\cX_t} \simeq \cL_t$. 
We can lift sections of $\cL_t$ to $\cL$ as follows.   

\begin{claim}\label{claim:extensionsection}
The restriction $H^0(\cX, \cL) \rightarrow H^0(\cX_t,\cL_t)$ is surjective. 
\end{claim}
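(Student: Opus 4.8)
The plan is to realize the restriction map as one of the maps in a Koszul-type long exact sequence on $\cX$, and then reduce surjectivity to a vanishing/injectivity statement for $H^1(\cX,\cL)$. Since $\Delta^1$ is a disc, the point $t$ is a principal divisor: the coordinate function furnishes a global equation $s-t$ cutting out the fibre $\cX_t=\phi^{-1}(t)$, so that $\cO_{\cX}(-\cX_t)\simeq\cO_{\cX}$. As $\cX$ is smooth and connected (its general fibre is a connected Calabi-Yau manifold), $s-t$ is a global nonzerodivisor, and tensoring the Koszul sequence $0\to\cO_{\cX}\xrightarrow{s-t}\cO_{\cX}\to\cO_{\cX_t}\to 0$ with the flat sheaf $\cL$ yields the exact sequence
\[
0 \longrightarrow \cL \xrightarrow{\ \cdot(s-t)\ } \cL \longrightarrow \cL_t \longrightarrow 0 .
\]
The associated long exact sequence shows that the restriction $H^0(\cX,\cL)\to H^0(\cX_t,\cL_t)$ is surjective precisely when multiplication by $s-t$ is injective on $H^1(\cX,\cL)$.

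First I would compute $H^1(\cX,\cL)$. Because $\Delta^1$ is Stein, Cartan's Theorem B kills the higher cohomology of coherent sheaves on $\Delta^1$, so the Leray spectral sequence for $\phi$ degenerates and gives $H^1(\cX,\cL)\simeq H^0(\Delta^1,R^1\phi_*\cL)$ (and of course $H^0(\cX,\cL)=H^0(\Delta^1,\phi_*\cL)$). Both $\phi_*\cL$ and $R^1\phi_*\cL$ are coherent by Grauert's coherence theorem, since $\phi$ is proper and $\cL$ is coherent and flat over $\Delta^1$.

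Next I would locate the support of $\cG:=R^1\phi_*\cL$. On a general fibre, $\cX_t$ is a smooth projective Calabi-Yau $n$-fold with $\omega_{\cX_t}\simeq\cO_{\cX_t}$ and $\cL_t$ very ample, so Kodaira vanishing gives $H^i(\cX_t,\cL_t)=H^i(\cX_t,\cL_t\otimes\omega_{\cX_t})=0$ for every $i>0$ and every $t\neq 0$. By Grauert's base-change/semicontinuity theorem, the constancy ($\equiv 0$) of $h^1(\cX_{t'},\cL_{t'})$ near any $t\neq 0$ forces $\cG$ to be locally free there of rank $0$; hence $\cG_t=0$ for all $t\neq0$, i.e.\ $\cG$ is supported at the origin.

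Finally, a coherent sheaf $\cG$ supported at $0\in\Delta^1$ has $H^0(\Delta^1,\cG)=\cG_0$ a finite-length $\cO_{\Delta^1,0}$-module annihilated by some power $(s^N)$ of the maximal ideal, so $s$ acts nilpotently on it. Then for $t\neq 0$ the operator $s-t=-t(1-t^{-1}s)$ is invertible by a terminating Neumann series, in particular injective. Thus multiplication by $s-t$ is injective on $H^1(\cX,\cL)$, and the restriction map is surjective, proving the claim. The hard part is the support computation of the third paragraph: it rests entirely on applying the analytic cohomology-and-base-change theorem to the proper but in general \emph{non-projective} total space $\phi\colon\cX\to\Delta^1$, combined with Kodaira vanishing on the smooth general fibre; once $R^1\phi_*\cL$ is identified as a skyscraper at $0$, the remaining step is purely formal.
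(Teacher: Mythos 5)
Your proof is correct and takes essentially the same route as the paper's: the same short exact sequence $0\to\cL\xrightarrow{\ s-t\ }\cL\to\cL_t\to 0$ (the paper writes the middle term as $\cL\otimes\cO_{\cX}(-\cX_t)$ and then uses $\cO_{\cX}(-\cX_t)\simeq\cO_{\cX}$), the same reduction to injectivity of the connecting map on $H^1(\cX,\cL)$, and the same use of the Stein base, Grauert coherence, and the skyscraper support of $R^1\phi_*\cL$ coming from vanishing on nearby projective fibers. The only (cosmetic) difference is the last step: the paper concludes injectivity because the map is a surjective endomorphism of the finite-dimensional space $H^1(\cX,\cL)$ (surjectivity coming from $H^1(\cX_t,\cL_t)=0$), while you conclude it because $s$ acts nilpotently on the finite-length stalk at the origin, so $s-t$ acts invertibly; these are equivalent pieces of linear algebra resting on the same inputs.
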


\begin{proof}[Proof of Claim] 
Since we have an exact sequence
\begin{multline}
H^0(\cX, \cL) \rightarrow H^0(\cX_t, \cL_t) \\ 
\rightarrow H^1(\cX, \cL \otimes \cO_{\cX}(- \cX_t)) 
\xrightarrow{\Phi} H^1(\cX, \cL) \rightarrow H^1(\cX_t, \cL_t), 
\end{multline}
it is enough to show that $\Phi$ is injective. 
We see that $\Phi$ is surjective by $H^1(\cX_t, \cL_t) =0$. 
We also see that $H^1(\cX, \cL)$ is finite dimensional. Indeed, 
$H^1(\Delta^1, \phi_* \cL) =0$ and $H^0(\Delta^1, R^1 \phi_* \cL)$ is finite dimensional 
since $R^1 \phi_* \cL$ is coherent and supported on the origin. By these and $\cO_{\cX}(- \cX_t) \simeq \cO_{\cX}$, 
we see that $\Phi$ is an isomorphism, thus injective. 
\end{proof}
By Claim \ref{claim:extensionsection}, we can choose sections $s_0, \ldots, s_M \in H^0(\cX, \cL)$ 
which lift a basis of $H^0(\cX_t, \cL_t)$. 
Let $Z(s_j) \subset \cX$ be the divisor defined by $s_j$ for $j=0, \ldots, M$. 
Let \[
m_i:= \min_{j=0, \ldots, M} \{\mult_{X_i}(Z(s_j)) \}
\]
 for $i=1, 2$ and $\cL':= \cL \otimes \cO_{\cX}(- m_1 X_1 - m_2 X_2)$. 
Then we obtain sections $s'_0, \ldots, s'_M \in H^0(\cX, \cL')$ induced by $s_0, \ldots, s_M$ 
whose base locus does not contain $X_1$ and $X_2$. 
Hence there exists  $s' \in H^0(\cX, \cL')$ which does not vanish identically on each $X_i$.  

Now let $\cL_0 := \cL'|_{\cX_0}$. 
Then we have non-zero sections $s'|_{X_i} \in H^0(X_i, \cL_0|_{X_i}) $ for $i=1,2$. 
We also see that $\cL_0$ is big since we have $\cL'|_{\cX_t} \simeq \cL_t$ and check that 
$H^0( \cX, \cL'^{\otimes m}) \rightarrow H^0(\cX_t, \cL_t^{\otimes m})$ is surjective for $m >0$ as Claim \ref{claim:extensionsection}. 
Thus $\cL_0$ has the required property. 
\end{proof}

\begin{rem}
There is a conjecture which states that any smooth degeneration of a projective manifold is Moishezon (\cite[Conjecture 1.1]{MR3127061}, see also \cite{Popovici:2019aa}, \cite{Rao:2019aa}).  
We can also ask whether a semistable degeneration of a projective manifold admits a big line bundle as in Lemma \ref{lem:biglinebundle}. 

\end{rem}

%

We can conclude that $X_0$ and $X$ are both non-projective by the following. 

\begin{prop}\label{prop:nonprojective}
Let $X_0:=X_0(a)$ and $X:= X(a)$ be the SNC variety and the Calabi-Yau $3$-fold constructed in Theorem \ref{thm:mainexample} for $a >0$. Let $\cL_0 \in \Pic X_0$ be a line bundle such that $h^0(X_i, \cL_i) >0$ for $i=1,2$, where $\cL_i:= \cL_0|_{X_i}$. 
Also let $E_j \subset X_1$ for $j=1,\ldots, a$ be the exceptional divisor over the elliptic curve $f_j$. 

Then we have 
\[
\cL_1 \simeq \mu^* \cO_{P(3)}(a_1,0,0) - \sum_{j=1}^{a} b_j E_j, 
\]
\[
 \cL_2 \simeq \cO_{P(3)}(a_1 - \sum_{j=1}^a b_j ,0,0)
 \] for some $a_1 \ge 0$ and $b_j \in \bZ$.  
In particular, $X_0$ does not admit a line bundle as in Lemma \ref{lem:biglinebundle}, thus $X_0$ and $X$ are not projective. 
\end{prop}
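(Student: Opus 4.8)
The plan is to read off $\mathcal{L}_1$ and $\mathcal{L}_2$ in explicit bases, turn the gluing condition into one linear relation in $\Pic S$, and then use positivity on each component to squeeze out all the ``transverse'' coordinates. Write
\[
\mathcal{L}_1 = \mu^*\cO_{P(3)}(a_1,a_2,a_3) - \sum_{j=1}^a b_j E_j - cF
\]
in $\Pic X_1 = \mu^*\Pic P(3) \oplus \bigoplus_j \bZ E_j \oplus \bZ F$, where $F$ is the exceptional divisor over the strict transform of $C_a$, and $\mathcal{L}_2 = \cO_{P(3)}(d_1,d_2,d_3)$. Since $X_0 = X_1 \cup^{\iota_a} X_2$ is the push-out of Corollary \ref{cor:pushoutSNC}, giving $\mathcal{L}_0$ is the same as giving $(\mathcal{L}_1,\mathcal{L}_2)$ with matching restrictions on the double locus, i.e.\ $\mathcal{L}_1|_{S_1} = \iota_a^*(\mathcal{L}_2|_{S_2})$; identifying $\Pic S_1 \cong \Pic S$ via $\mu_1 = \mu|_{S_1}$ this becomes $\mathcal{L}_1|_{S_1} = (\iota^a)^*(\mathcal{L}_2|_{S_2})$ in $\Pic S = \bZ e_1 \oplus \bZ e_2 \oplus \bZ e_3$. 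I would first record the restriction classes $E_j|_{S_1} = e_1$ (its trace on $S_1$ is the fibre $f_j$) and $F|_{S_1} = [C_a] = (16a^2-a+4)e_1 + (4-8a)e_2 + (4+8a)e_3$, so that the $e_2$- and $e_3$-coefficients of $\mathcal{L}_1|_{S_1}$ are $a_2 - c(4-8a)$ and $a_3 - c(4+8a)$. Comparing these with $(\iota^a)^*(\sum_i d_i e_i)$, read off from the matrix of Claim \ref{claim:autoactionmatrix}, yields the two equations $a_2 = (1-2a)d_2 - 2a d_3 + c(4-8a)$ and $a_3 = 2a d_2 + (1+2a)d_3 + c(4+8a)$, together with a third equation in the $e_1$-coordinate.

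Next I would establish the relevant inequalities. Effectivity of $\mathcal{L}_2$ on $P(3)=\bP^1\times\bP^1\times\bP^1$ gives $d_1,d_2,d_3 \ge 0$ by K\"unneth. For $\mathcal{L}_1$, intersecting the effective divisor with the strict transform $\widetilde{C_i}$ of a general line in the $i$-th factor of $P(3)$ — which, being a general curve, avoids all blow-up centres and hence meets no $E_j$ nor $F$ — gives $\mathcal{L}_1\cdot\widetilde{C_i} = a_i$; as these curves cover a dense open subset of $X_1$, effectivity forces $a_1,a_2,a_3 \ge 0$.

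The heart of the argument is to eliminate the unknown $c$ from the two displayed equations by the combination $(4+8a)\times(\text{first}) + (8a-4)\times(\text{second})$, in which the $c$-terms cancel because $(4-8a)+(8a-4)=0$:
\[
(4+8a)\,a_2 + (8a-4)\,a_3 = (4-8a)\,d_2 - (8a+4)\,d_3 .
\]
For $a \ge 1$ the coefficients $4+8a$ and $8a-4$ on the left are positive while $4-8a$ and $-(8a+4)$ on the right are negative, so the left-hand side is $\ge 0$ and the right-hand side is $\le 0$; hence both vanish and $d_2 = d_3 = 0$. Feeding this back, $a_2 = c(4-8a)\ge 0$ forces $c \le 0$ and $a_3 = c(4+8a)\ge 0$ forces $c \ge 0$, so $c=0$ and then $a_2=a_3=0$; finally the $e_1$-equation collapses to $d_1 = a_1 - \sum_j b_j$, with $a_1 \ge 0$. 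This gives exactly the asserted shapes. I expect this elimination — where the precise entries of $(\iota^a)^*$ and the class $[C_a]$ must line up so that the two inequalities pinch $d_2,d_3$ to $0$ — to be the crux; everything before it is bookkeeping and everything after it is formal.

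To conclude non-projectivity, observe that $\mathcal{L}_2 \simeq \cO_{P(3)}(a_1 - \sum_j b_j,0,0)$ is pulled back from the first $\bP^1$-factor, so $\mathcal{L}_2^{\,3}=0$ and $h^0(X_2,\mathcal{L}_2^{\otimes m}) = O(m)$; a pushforward computation (using $\mu_*\cO_{X_1}(-\sum_j b_j E_j)\subseteq\cO_{P(3)}$) likewise gives $h^0(X_1,\mathcal{L}_1^{\otimes m}) = O(m)$. From the exact sequence $0\to\cO_{X_0}\to\cO_{X_1}\oplus\cO_{X_2}\to\cO_{X_{12}}\to 0$ twisted by $\mathcal{L}_0^{\otimes m}$, sections of $\mathcal{L}_0^{\otimes m}$ inject into $H^0(X_1,\mathcal{L}_1^{\otimes m})\oplus H^0(X_2,\mathcal{L}_2^{\otimes m})$, so no line bundle with $h^0(X_i,\mathcal{L}_i)>0$ on both components is big; hence $X_0$ admits no line bundle as in Lemma \ref{lem:biglinebundle}. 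If $X_0$ were projective, an ample bundle would restrict to an ample, a fortiori big, bundle on $X_2$, which is impossible; and if $X=X(a)$ were projective, then Lemma \ref{lem:biglinebundle} applied to the smoothing $\phi_a$ of Theorem \ref{thm:mainexample} — whose double locus $X_{12}\cong S$ is a simply connected K3 surface — would produce precisely such a big line bundle, again a contradiction. Therefore both $X_0$ and $X$ are non-projective.
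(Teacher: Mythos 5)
Your proposal is correct and follows essentially the same route as the paper: write $\cL_1,\cL_2$ in the natural bases, impose the gluing condition $\cL_1|_{S_1}\simeq(\iota_a)^*\cL_2|_{S_2}$ using the matrix of $(\iota^a)^*$, and use effectivity to force the $e_2$- and $e_3$-coordinates and $c$ to vanish. The only (harmless) differences are that you eliminate $c$ directly via the positive combination $(4+8a)\cdot(\text{first})+(8a-4)\cdot(\text{second})$, where the paper instead argues by contradiction through $c<0$ and the identity $4c=c(8a+4)-\tfrac{2a}{2a-1}c(8a-4)$, and that you spell out the concluding non-bigness/non-projectivity deduction (section growth $O(m)$ via pushforward and the restriction sequence, then Lemma \ref{lem:biglinebundle}) which the paper leaves implicit.
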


\begin{proof}
Recall that $\mu \colon X_1 \rightarrow P(3)$ is the blow-up of 
$f_1, \ldots, f_{a}, C_a$ and $X_2= P(3)$, where 
$f_1, \ldots f_{a} \in |\cO_S(1,0,0)|$ and $C_a \in |\cO_S(16a^2-a+4, 4-8a, 4+8a)|$. 
Let $F \subset X_1$ be the $\mu$-exceptional divisor over $C_a$. 
Then we can write 
\[
\cL_1 = \mu^* \cO_{P(3)}(a_1, a_2, a_3) \otimes \cO_{X_1}( - \sum_{j=1}^{a} b_j E_j - cF)
\]
for some integers $a_1, a_2, a_3, b_1, \ldots, b_{a}, c$. 
We can also write 
\[
\cL_2 = \cO_{P(3)}(a_1', a_2', a_3')
\]
for some integers $a'_1, a'_2, a'_3$. 
We see that $a_i, a'_i  \ge 0$ for all $i$ since $\cL_1$ and $\cL_2$ are effective. 

Note that $X_0$ is the union of $X_1$ and $X_2$ glued along anticanonical members $S_i \in |{-}K_{X_i}|$ via an isomorphism 
\[
\iota_a:=\iota^a \circ \mu|_{S_1} \colon S_1 \rightarrow S_2.
\] 
Then we have 
\[
\cL_1|_{S_1} \simeq (\iota_a)^* \cL_2|_{S_2} 
\]
and the both sides can be written as follows; 
\[
\cL_1|_{S_1} \simeq \cO_{S_1}(a_1 - \sum_{j=1}^{a} b_j - c(16a^2 -a+4), a_2 - c(4-8a), a_3 - c(4+8a)), 
\]
\begin{multline*}\label{mult:L1S1L2S2}
(\iota_a)^* \cL_2|_{S_2} \simeq \cO_{S_1}(a_1' + a_2'(4a^2-2a) + a_3'(4a^2 +2a),\\
 a_2'(1-2a) +a_3'(-2a), a_2'(2a)+a_3'(1+2a)). 
\end{multline*} 
By comparing the 2nd and 3rd coordinates, we obtain 
\[
a_2 + c (8a-4) = a_2'(1-2a) + a_3' (-2a), 
\]
\[
a_3 - c(8a+4) = a_2' (2a) + a_3' (1+2a).  
\]
These imply that
\begin{equation}\label{eq:1stequation}
c(8a-4) = -a_2 + a_2'(-2a+1) + a_3' (-2a), 
\end{equation}
\begin{equation}\label{eq:2ndequation}
c(8a+4) = a_3 + a_2'(-2a)+ a_3'(-2a-1).  
\end{equation}

Now suppose that one of $a_2, a_3, a_2', a_3'$ is positive. 
By the equation (\ref{eq:1stequation}), we obtain $ c \le 0$. 
$c=0$ is possible only when $a_2 = a_2' = a_3' =0$. 
Then we have $a_3 >0$ and this contradicts (\ref{eq:2ndequation}). 
Hence we obtain $c<0$. 
 Moreover, by (\ref{eq:1stequation}) and (\ref{eq:2ndequation}), we obtain 
\[
0 > 4c = c(8a+4) - \frac{2a}{2a-1}c(8a-4) = a_3 + \frac{2a}{2a-1}a_2 + \frac{1}{2a-1} a_3' \ge 0.  
\]
This is a contradiction and we see that $a_2 = a_3 = a_2' = a_3' =0$. 
This implies $c=0$ and that $\cL_1$ and $\cL_2$ are of the form as in the statement. 
%
%
\end{proof}

Furthermore, we compute the algebraic dimension of the very general fiber $X$ as follows. 

\begin{prop}\label{prop:algdimX1}
Let $X=X(a)=\cX_t$ be a smooth fiber of a semistable smoothing $\cX(a) \rightarrow \Delta^1$ over $t \in \Delta^1 \setminus \{0 \}$ as in Theorem \ref{thm:mainexample}. 
Let $a(X)$ be its algebraic dimension. 
\begin{enumerate}
\item[(i)] $X$ admits a surjective morphism $\varphi \colon X \rightarrow \bP^1$ 
whose general fibers are K3 surfaces. 
\item[(ii)] $X$ is not projective. 
\item[(iii)] We have $a(X)=1$ for a very general $t \in \Delta^1$. 
\end{enumerate}
\end{prop}

\begin{proof}
(i) Let $\cH_1:= \mu^* \cO(1,0,0)$ on $X_1$ and $\cH_2:= \cO(1,0,0)$ on $X_2$. These glue to give a line bundle $\cH_0 \in \Pic X_0$ which induces a morphism $X_0 \rightarrow \bP^1$.  
We calculate that $H^1(X_0, \cH_0)=0$.  

Since we have $H^2(\cX, \bZ) \simeq H^2(X_0, \bZ)$, there exists $\cH \in \Pic \cX$ such that $\cH|_{X_0} \simeq \cH_0$. 
We check that $H^1(\cX, \cH) =0$ by $H^1(\cX_0, \cH_0) =0$ and the upper semicontinuity theorem. 
Thus we see that $H^0(\cX, \cH) \rightarrow H^0(\cX_t, \cH_t)$ is surjective  for   $t \in \Delta^1$ 
sufficiently close to $0$.  
Hence the line bundle $\cH_t$ also induces a surjective morphism $\varphi_t \colon X:=\cX_t \rightarrow \bP^1$. 

We check that the general fiber $X_{\lambda}$ of $\varphi_t$ at $\lambda \in \bP^1$ is a K3 surface as follows. 
Let $X_{i, \lambda}$ be the general fiber of the morphism $X_i \rightarrow \bP^1$ induced by $\cH_i$ for $i=0,1,2$. We see that $X_{1, \lambda}$ is isomorphic to a blow-up of $\bP^1 \times \bP^1$ at $16$ points and 
$X_{2, \lambda} \simeq \bP^1 \times \bP^1$. Thus we compute that $H^1(X_{0, \lambda}, \cO) =0$ and this implies that 
$H^1(X_{\lambda}, \cO) =0$ by the upper semi-continuity. Hence $X_{\lambda}$ is a K3 surface. 

\noindent(ii) Suppose that some $\cX_t$ is a projective Calabi-Yau 3-fold. By Lemma \ref{lem:biglinebundle}, 
there exists a big line bundle $\cL_0$ on $\cX_0$ such that $h^0(\cL_0|_{X_i}) >0$ for $i=1,2$. 
However, this does not exist on $X_0(a)$ by Proposition \ref{prop:nonprojective}. 
This is a contradiction and $\cX_t$ is not projective.

\noindent(iii) We note that, for $\cM \in \Pic \cX$, the dimension 
$h^0(\cX_t, \cM_t)$ for $\cM_t:= \cM|_{\cX_t}$ is constant for very general $t \in \Delta^1$ and $h^0(\cX_t, \cM_t) \le h^0(\cX_0, \cM_0)$. This follows from  
the upper semicontinuity theorem and  that $\Pic X_0$ is countable. 

Suppose that $a(X) \ge 2$. Then $X$  admits an effective line bundle $L$ with the Kodaira dimension 
$\kappa(L) \ge 2$.  
Since the restriction homomorphism $H^2(\cX, \bZ) \rightarrow H^2(\cX_t, \bZ)$ is surjective, there exists $\cL \in \Pic \cX$ such that 
$\cL_t:= \cL|_{\cX_t} \simeq L$. 
Then we see that $\kappa(\cL_{t'}) \ge 2$ for very general $t' \in \Delta^1$. 
Hence we obtain  $\kappa(\cL_0) \ge 2$ and $\phi_* \cL \neq 0$. Thus we see that 
$H^0(\cX, \cL) = H^0(\Delta^1, \phi_* \cL) \neq 0$ since $\phi_* \cL$ is coherent. 
Now, for $i=1,2$, let 
\[
m_i := \min \{\ord_{X_i}(s) \mid s \in H^0(\cX, \cL) \setminus \{0 \} \}
\] and 
$\cL':= \cL \otimes \cO_{\cX}(-m_1 X_1 - m_2 X_2)$.  
Then $\cL'$  admits a non-zero section which does not vanish entirely on both  $X_1$ and $X_2$. 
Thus we see that $\cL'_0:=\cL' |_{X_0}$ satisfies the property as in Proposition \ref{prop:nonprojective} and 
$\kappa (\cL'_0) \le 1$. This is a contradiction since $\cL'_0$ should also satisfy $\kappa (\cL'_0) \ge 2$. 

Hence we obtain $a(X) \le 1$. By this and (i), 
we obtain $a(X) =1$. 
\end{proof}

\begin{rem}\label{rem:classC}
We also check that $X= \cX_t$ is not of class $\cC$ for a very general $t$, that is, not bimeromorphic to a K\"{a}hler manifold as follows. 
Suppose that $X$ is of class $\cC$ and has a proper bimeromorphic map $\tilde{X} \rightarrow X$
from a K\"{a}hler manifold $\tilde{X}$. 
Since we also have $H^2(\tilde{X}, \bC) \simeq H^1(\tilde{X}, \Omega^1_{\tilde{X}})$ by $H^2(\tilde{X}, \cO_{\tilde{X}}) =0 = H^0(\tilde{X}, \Omega^2_{\tilde{X}})$, 
we see that $\tilde{X}$ is projective by the Kodaira embedding theorem. 
Thus $X$ is Moishezon and this contradicts Proposition \ref{prop:algdimX1}.

We do not know whether a Calabi-Yau 3-fold of algebraic dimension $\ge 2$ appear 
as some fiber of the smoothing. 
Note that the Moishezon (or class $\cC$) property is not stable under deformation (\cite{MR1107661}, \cite{MR1137099}). 
\end{rem}

\section*{Acknowledgement}
We would like to thank Professor Yoshinori Namikawa for several useful comments. 
We would also like to thank Professors Fr\'{e}d\'{e}ric Campana, Yuji Odaka, Keiji Oguiso, Dmitri Panov, Ken-ichi Yoshikawa and the anonymous referees for valuable comments. 
This work was partially supported by JSPS KAKENHI Grant Numbers JP16K17573, JP17H06127 and JP17K14156.

\bibliographystyle{amsalpha}
\bibliography{sanobibs}

\end{document}